\newcommand{\rt}{\rightarrow}
\newcommand{\lrt}{\longrightarrow}
\newcommand{\st}{\stackrel}
\newcommand{\fp}{\frak{p}}
\newcommand{\Z}{\mathbb{Z} }
\newcommand{\g}{\bf{g}}
\newcommand{\f}{\bf{f}}
\newcommand{\C}{\mathcal{C} }
\newcommand{\G}{\mathcal{G} }
\newcommand{\E}{\mathcal{E} }
\newcommand{\Sc}{\mathcal{S} }
\newcommand{\HH}{\mathcal{H} }
\newcommand{\X}{\mathcal{X} }
\newcommand{\Y}{\mathcal{Y} }
\newcommand{\A}{\mathcal{A} }
\newcommand{\M}{\mathcal{M} }
\newcommand{\Ker}{\bf{Ker} }
\newcommand{\Coker}{\bf{Cok} }
\newcommand{\fo}{\footnotesize }
\newcommand{\op}{{\rm{op}}}
\newcommand{\h}{\bf{h} }
\newcommand{\e}{\bf{e} }
\newcommand{\p}{\bf{p} }
\newcommand{\GG}{\rm{G}}
\newcommand{\coker}{{\rm{Cok}}}
\newcommand{\ke}{{\rm{Ker}}}
\newcommand{\End}{{\rm{End}}}
\newcommand{\Tr}{{\rm{Tr}}}
\newcommand{\red}{{\rm{red}}}
\newcommand{\proj}{{\rm{proj}}}
\newcommand{\inj}{{\rm{inj}}}
\newcommand{\Hom}{{\rm{Hom}}}
\newcommand{\Ext}{{\rm{Ext}}}
\newcommand{\Q}{\mathcal{Q} }
\newcommand{\m}{\mathfrak{m}}
\newcommand{\uEnd}{\underline{\rm{End}}}
\newtheorem{theorem}{Theorem}[section]
\newtheorem{cor}[theorem]{Corollary}
\newtheorem{lemma}[theorem]{Lemma}
\newtheorem{prop}[theorem]{Proposition}
\theoremstyle{definition}
\newtheorem{dfn}[theorem]{Definition}
\newtheorem{example}[theorem]{Example}
\newtheorem{remark}[theorem]{Remark}
\newtheorem{s}[theorem]{}
\theoremstyle{plain}
\theoremstyle{definition}
\numberwithin{equation}{section}
\begin{document}

\title[Specifying The Auslander transpose in submodule category]{Specifying The Auslander transpose in submodule category and its applications}

\author[Bahlekeh, Mahin Fallah and Salarian]{Abdolnaser Bahlekeh, Ali Mahin Fallah and Shokrollah Salarian}

\address{Department of Mathematics, Gonbade-Kavous University, Postal Code:4971799151, Gonbade Kavous, Iran}
\email{n.bahlekeh@gmail.com}

\address{Department of Mathematics, University of Isfahan, P.O.Box: 81746-73441, Isfahan, Iran}
 \email{amfallah@sci.ui.ac.ir}

\address{Department of Mathematics, University of Isfahan, P.O.Box: 81746-73441, Isfahan,
 Iran and \\ School of Mathematics, Institute for Research in Fundamental Science (IPM), P.O.Box: 19395-5746, Tehran, Iran}
 \email{Salarian@ipm.ir}

\subjclass[2000]{13H10, 16G30, 13D02, 13D07.}

\keywords{Auslander transpose; morphism category of modules; horizontally linked morphisms;
Auslander-Reiten translation; almost split sequence.}

\begin{abstract}Let $(R, \m)$ be a $d$-dimensional commutative noetherian local ring.
Let $\M$ denote the morphism category of finitely generated $R$-modules
and let $\Sc$ be the submodule category of $\M$.
In this paper, we specify the Auslander transpose in submodule category
$\Sc$. It will turn out that the Auslander transpose in this category
can be described explicitly within ${\rm mod}R$, the category of finitely
generated $R$-modules. This result is exploited to study the linkage theory
as well as the Auslander-Reiten theory in $\Sc$.
Indeed, a characterization of horizontally linked morphisms in terms of
module category is given. In addition, motivated by a result of Ringel and Schmidmeier,
we show that the Auslander-Reiten translations in the subcategories $\HH$ and $\G$,
consisting of all morphisms which are maximal Cohen-Macaulay $R$-modules
and Gorenstein projective morphisms, respectively, may be computed within ${\rm mod}R$
via $\G$-covers. Corresponding result
for subcategory of epimorphisms in $\HH$ is also obtained.
\end{abstract}

\maketitle

\tableofcontents

\section{Introduction}
Throughout this paper $(R, \m)$ is a $d$-dimensional commutative noetherian local ring
and ${\rm mod} R$ is the category of all finitely generated $R$-modules.
Denote by $\M$ the morphism category of $R$ whose objects are all $R$-homomorphisms
${\bf f}:A\rt B$ in ${\rm mod}R$ and morphisms are given by commutative squares.
We also let $\Sc$ denote the full subcategory of $\M$ consisting of all $R$-monomorphisms,
which is known as {\sl submodule category}.
It is evident that $\M$ is an abelian category and $\Sc$ is an
additive subcategory of $\M$ which is closed under extensions,
thus it becomes an exact category in the sense of Quillen; see \cite[Appendix A]{k}.

The study of submodule categories goes back to G. Birkhoff \cite{Bi},
in which he has initiated to classify the indecomposable objects of the
submodule category of $\Sc(\Z/{<\p^{t}>})$. Since then there has been
another approach to classify submodule categories of representation
type; see \cite{RS, Si, SW}.
Over noetherian algebras, many people have written beautiful papers,
providing deep insights in classifying the finite Cohen-Macaulay typeness
of the submodule category $\M$; see for instance  \cite{Be}, \cite{Ch} and \cite{R}. Furthermore, there are
many attempts to obtain descriptions for projective, injectivi, Gorenestin projective and injective
objects of $\M$ with respect to objects and morphisms in mod$R$; see e.g. \cite{EEG, EHS}.

One of the most powerful tools in liasion theory and representation theory, especially the Auslander-Reiten
theory, is the Auslander transpose. Assume for a moment that $R$ is a (non-commutative) noetherian semiperfect ring.
Assume that $M$ is a finitely generated (right) $R$-module and $P_1\st{f}\lrt P_0\lrt M\lrt 0$
a minimal projective presentation of $M$. Dualizing this sequence by $(-)^*=\Hom_{R}(-, R)$,
the Auslander transpose of $M$, denoted by $\Tr_{R}M$, is defined as $\coker(f^*)$.

It is well-understood that submodule categories are much more complicated than the underlying
module category ${\rm mod}R$. So giving a description of the Auslander transpose
of objects of submodule category $\Sc$ in terms of module category,
seems to be of much interest. Inspired by this idea, in this manuscript we intend
to specify the Auslander transpose for objects in the submodule category $\Sc$.
In this direction, after showing that any object of $\Sc$ admits a projective cover,
we compute the Auslander transpose of a given object $(A\st{\f}\rt B)\in\Sc$,
$\Tr_{\M}{\f}$, by applying the dual functor
$\Hom_{\M}(-, \bf r)$ to the minimal projective presentation of ${\f}$,
in which $\bf r$ denotes the split monomorphism $(R\rt R\oplus R)$.
It will turn out that the Auslander transpose in the  morphism
category can be computed directly within the category of ${\rm mod}R$.
Precisely, it is shown that for a given object $(A\st{\f}\rt B)\in\Sc$,
there is a projective $R$-module $Q$ such that $\Tr_{\M}{\f}=(\Tr_R\coker{\f}\rt\Tr_RB\oplus Q)$;
see Proposition \ref{lem16}. This technique is exploited to investigate the linkage theory
in submodule category $\Sc$. The theory of linkage of algebraic varieties was introduced by Peskine and
Szpiro in \cite{PS} and generalized to module theoretic version by Martsinkovsky
and Strooker in \cite{MS}. Indeed, they generalized the theory for wild class of rings,
including non-commutative semiperfect noetherian rings $\Lambda$ by using the composition of two functors:
transpose and syzygy. A finitely generated $\Lambda$-module $M$
and a $\Lambda^{\op}$-module $N$ are said to be {\it horizontally linked},
if $M\cong\lambda N$ and $N\cong\lambda M$ in which $\lambda=\Omega\Tr$. Equivalently, $M$
is horizontally linked (to $\lambda M$) if and only if $M\cong\lambda^2M$; see \cite[Definition 2]{MS}.
Following this, we say that an object $(A\st{\f}\rt B)\in\Sc$ is horizontally linked,
if ${\f}\cong\lambda^2{\f}$, whenever $\lambda=\Omega^1_{\M}\Tr_{\M}$.
Surprisingly, it will be observed that an object $(A\st{\f}\rt B)\in\Sc$
horizontally linked if and only if $A$ and $B$ are horizontally linked $R$-modules; see Theorem \ref{th5}.
As an application, we deduce that any Gorenstein projective objects of $\M$,
is horizontally linked.

A nice result of Ringel and Schmidmeier 
indicates that, over an artin algebra $\Lambda$, the Auslaner-Reiten translation is the submodule category
can be computed within ${\rm mod}\Lambda$ by using their construction of
minimal monomorphisms; see \cite[Theorem 5.1]{RS}. It is quite natural to ask
whether there is an analogues of Ringel and Schmidmeier's result for
noetherian algebra or not. Motivated by such a question, in the paper's final
section we will apply the description of the Auslander transpose in submodule category obtained in Sec. 3
and answer the raised question affirmatively. To be precise more,
assume that $Q:\bullet\rt\bullet$ is a quiver and $R$ is a complete Gorenstein ring. Since the morphism
category $\M$ is equivalent to the category of finitely generated modules over the path algebra $R\Q$, ${\rm mod}R\Q$,
the main result of Auslander and Reiten in \cite{AR5} asserts that
the full subcategory of $\M$ whose objects are maximal Cohen-Macaulay
$R$-modules, denoted by $\HH$, admits almost split sequences.
In particular, for a given non-projective indecomposable object ${\f}$ of $\HH$
which is locally projective on the punctured spectrum of $R$, in the sense we will define later,
the Auslander-Reiten translation of $\f$ in $\HH$, $\tau_{\HH}({\f})$,
has the form $(\Omega^d_{\HH}\Tr_{\HH}({\f}))'$, where $(-)'=\Hom_R(-, R)$.
On the other hand, with the aid of an interesting result due to Liu, Ng and Paquette
\cite[Proposition 4.5]{LNP}, we have been able to show that the
subcategories $\G$ and $\E$ of $\HH$ admit almost split sequences,
whenever $\G$ and $\E$ denote respectively the full subcategories of $\HH$
consisting of Gorenstein projective objects and epimorphisms.
Moreover, it will turn out that the Auslander-Reiten translations $\tau_{\HH}$,
$\tau_{\G}$ and $\tau_{\E}$ can be computed directly in ${\rm mod}R$ by using
${\G}$-cover and $\E$-envelope of $\tau_{R}$ and $\tau_{R}^{-1}$.
More specifically, let ${\f}$ (resp. ${\g}$) be an indecomposable non-projective (resp. non-injective)
object of $\G$ (resp. $\E$) with is locally projective on the punctured spectrum of $R$,
then we have the following;
\begin{itemize}\item[$(1)$]$\tau_{\HH}({\f})\cong{\Coker}(\red(\G{\text-cov}~\tau_{R}{\f}))\\
{\text \ \ \ \ \ \ \   } \cong\red({\E}{\text -env}~\tau_R {\Coker}({\f}))$.
\item[$(2)$] $\tau_{\G}({\f})\cong\red(\G{\text -cov}~\tau_{R}\Coker(\bf f))$.
\item[$(3)$] $\tau_{\E}({\g})\cong{\Coker}(\red(\G{\text -cov}~\tau_R {\g})).$
\end{itemize}
Here, ${\Coker}(A\st{\f}\rt B)$ denotes the natural
projection $B\rt\coker({\f})$.
The corresponding results for $\tau^{-1}$ are also obtained. \\
\\

\section{Preliminaries}
In this section, we recall basic definitions and fundamental facts for later use.
Let us begin by recalling a  characterization of projective and injective objects
in the morphism category $\M$.
\begin{s}\label{s3}It is known that for a ring $R$, the projective objects of $\M$
are direct sums of the elements of the form $P\st{1}\rt P$ and $0\rt P$,
where $P$ runs among all finitely generated projective $R$-modules,
whereas every injective object in $\M$ is a product of the elements of the form
$I\st{1}\rt I$ and $I\rt 0$ in which $I$ runs among all injective $R$-modules.
More precisely, each indecomposable projective object of $\M$
is one of the mentioned form, whenever $P$ is an indecomposable projective $R$-module;
see \cite [Lemma 1.3]{RS}. Actually, the category $\M$ is clearly equivalent
to the category of modules over the path algebra $R\Q$ of the quiver
$\Q:\bullet\rt\bullet$ over $R$; see \cite{ARS}. Hence, among other sources,
the statements about projective and injective objects in $\M$ are just
special cases of the main results in \cite{EE} and \cite{EEG}, respectively.
We denote by $\proj\M$ and $\inj\M$ the class of all projective objects and
injective objects of $\M$, respectively.
Note that since $R$ is a commutative noetherian ring, $\M$ may be considered as a
noetherian $R$-algebra in the sense of \cite{A}.
\end{s}

\begin{s}{\bf Convention.} Throughout the paper we assume:\\
(1) $R$ is a $d$-dimensional commutative noetherian local ring with
maximal ideal ${\m}$ and  $\Lambda$ is a noetherian $R$-algebra,
that is, an $R$-algebra which is finitely generated as an $R$-module.
Unless specified explicitly, all modules are assumed to be finitely generated
right modules.\\
(2) For a given $R$-module $M$ any non-negative integer $n$,
$\Omega^nM,$ stands for the $n$th syzygy of a minimal projective
resolution of $M$, and so it is uniquely determined, up to isomorphism.\\
(3) For a given object $(A\st{\f}\rt B)\in\M$, we let
 ${\Ker}({\f})$ denote the inclusion map $\ke({\f})\rt A$, whereas
${\Coker}({\f})$ stands for natural projection
$B\rt \coker{(\f)}$, as objects of $\M$.
\end{s}

\begin{s}{\sc Gorenstein projective modules.} A finitely generated $\Lambda$-module
$M$ is said to be in $\GG(\Lambda)$,
which is also called Gorenstein projective, if it is a syzygy of an acyclic
complex of  projective $\Lambda$-modules;
$\cdots \lrt P_{n+1}\st{\delta_{n+1}}\lrt P_{n}\st{\delta_n}\lrt P_{n-1}\lrt \cdots,$
which remains acyclic after applying the functor $\Hom_{\Lambda}(-, P)$
for any projective $\Lambda$-module $P$.
It is worth pointing out that over commutative Gorenstein local rings,
the class of all maximal Cohen-Macaulay modules and Gorenstein projective modules are the same.
\end{s}

\begin{lemma}\label{lem7}Let $R$ be a complete local ring and $\Lambda$
a noetherian $R$-algebra. Let $A$ be an indecomposable non-projective and Gorenstein
projective $\Lambda$-module. Then the same is true for $\Omega^iA$, for all $i\geq 0$.
\end{lemma}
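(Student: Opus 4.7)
The plan is to verify the three properties---Gorenstein projectivity, non-projectivity, and indecomposability of $\Omega^iA$---separately, for general $i\ge 0$. Throughout, I would exploit that $R$ being complete local forces $\Lambda$ to be a semiperfect noetherian ring, so that Krull--Schmidt holds on ${\rm mod}\,\Lambda$ and every finitely generated $\Lambda$-module admits a minimal projective resolution.

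Gorenstein projectivity of $\Omega^iA$ is essentially immediate, since syzygies of Gorenstein projective modules are again Gorenstein projective (one simply shifts the totally acyclic complex witnessing $A\in\GG(\Lambda)$ to exhibit $\Omega^iA$ as such). Non-projectivity is equally clean: any Gorenstein projective module of finite projective dimension is projective, so a projective $\Omega^iA$ would force $\pd_\Lambda A\le i<\infty$ and make $A$ itself projective, contradicting the hypothesis.

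The main obstacle is indecomposability, and I would attack it in two stages. First, the Gorenstein projectivity of the preceding syzygy rules out nonzero projective direct summands of $\Omega^iA$: if $\Omega^iA=N\oplus Q$ with $Q$ a nonzero projective, then the vanishing $\Ext^1_\Lambda(\Omega^{i-1}A, Q)=0$ allows the projection $\Omega^iA\to Q$ to extend to a map $P_{i-1}\to Q$ across the short exact sequence $0\to\Omega^iA\to P_{i-1}\to\Omega^{i-1}A\to 0$; this splits $Q$ off the projective cover $P_{i-1}$, contradicting its minimality. Second, I would pass to the stable category $\underline{\GG}(\Lambda)$, which is triangulated and on which $\Omega$ acts as an autoequivalence. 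Because $A$ is indecomposable and non-projective with no projective direct summand, Krull--Schmidt implies that $A$ remains indecomposable in $\underline{\GG}(\Lambda)$; applying the autoequivalence, $\Omega^iA$ is then indecomposable in $\underline{\GG}(\Lambda)$ as well. Combined with the first stage, any nontrivial module-theoretic decomposition $\Omega^iA=M_1\oplus M_2$ would have both summands non-projective and hence yield a nontrivial decomposition in $\underline{\GG}(\Lambda)$, a contradiction. The delicate point is the passage between decompositions in ${\rm mod}\,\Lambda$ and in $\underline{\GG}(\Lambda)$, which is exactly where the semiperfectness of $\Lambda$ and the absence of projective direct summands are essential.
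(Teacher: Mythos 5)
Your proposal is correct and follows essentially the same route as the paper's proof: projective direct summands of $\Omega^iA$ are excluded by splitting them off the minimal projective cover via $\Ext^1_\Lambda(\Omega^{i-1}A,Q)=0$, and indecomposability is transferred through the stable endomorphism ring, which is local because it is isomorphic to $\uEnd_\Lambda(A)$ (your autoequivalence of $\underline{\GG}(\Lambda)$ is the paper's ``standard argument'' from $\Ext^1_\Lambda(A,P)=0$). The only difference is the final inference, where the paper cites Auslander's result that ${\mathfrak{P}}(M,M)\subseteq J(\End_\Lambda(M))$ for stable $M$, whereas you argue directly that a nontrivial decomposition would yield a nontrivial idempotent in the local ring $\uEnd_\Lambda(\Omega^iA)$ because neither summand could be projective; both steps are valid.
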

\begin{proof}If $i=0$, there is nothing to prove. In order to prove the claim for $i>0$,
it suffices to consider only the case $i=1$. Since $R$ is complete,
by invoking \cite[pp. 132]{cr} one deduces that $\Lambda$ is semiperfect,
meaning that every $\Lambda$-module admits a projective cover.
Taking a short exact sequence of
$\Lambda$-modules; $0\rt\Omega^1A\rt P\rt A\rt 0$, whenever $P\rt A$ is  a projective cover
 of $A$, we claim that $\Omega^1A$ is stable, that is, it has no any projective
 direct summand. Assume on the contrary that $\Omega^1A$ is
 written as $X\oplus Q$, for some non-zero projective $\Lambda$-module $Q$ and so, one
 has a short exact sequence $0\rt X\oplus Q\rt P\rt A\rt 0$. Hence, due to the fact that
 $\Ext_{\Lambda}^1(A, Q)=0$, one may infer that the morphism $Q\rt P$ is split, which
contradicts with the fact that $P\rt A$ is  a projective cover of $A$. On the other
hand, according to \cite[Exercise 6]{re},
${\rm mod}\Lambda$ has Krull-Schmidt property. Hence, $A$ being indecomposable yields that the
endomorphism ring of $A$, $\End_{\Lambda}(A)$, is local and so is $\uEnd_{\Lambda}(A)$.
Since $\Ext_{\Lambda}^1(A, P)=0$, for all projective $\Lambda$-modules $P$,
one may apply a standard argument to show that
$\uEnd_{\Lambda}(A)\cong\uEnd_{\Lambda}(\Omega^1A)$ implying the latter is a local ring.
As $\Omega^1A$ is stable, Proposition 2.5 of \cite{Au1} enforces that
${\mathfrak{P}}(\Omega^1A, \Omega^1A)$ is contained in $J(\End_{\Lambda}(\Omega^1A))$,
where ${\mathfrak{P}}(\Omega^1A, \Omega^1A)$ consists of all endomorphisms of $\Omega^1A$
which factor through projective $\Lambda$-modules. This, in particular, gives that
$\End_{\Lambda}(\Omega^1A)$ is a local ring ensuring
that $\Omega^1A$ is indecomposable. The proof then is complete.
\end{proof}

\begin{s}\label{s5}It should be observed that the definition
of Gorenstein projective object may be generalized to the
morphism category. In fact, a given object$(A\st{\f}\rt B)\in\M$ is Gorenstein projective
if and only if $\f\in\Sc$ and $A, \coker{\f}\in{\GG}(R)$; see \cite{EEG, EHS}.
The full subcategory of $\M$ consisting of Gorenstein projective objects
will be denoted by $\G$. Let $\HH$ stand for the full subcategory of $\M$
consisting of all objects $(A\st{\f}\rt B)$ in which $A, B\in{\GG}(R)$.
So, $\G$ becomes a full subcategory of $\HH$ as well.
Also, the full subcategory of $\HH$ which consists all epimorphisms
will be denoted by $\E$. Evidently, $\G$ and $\E$ are extension closed and hence they become as exact categories.
It can be easily verified that every indecomposable injective
object of $\HH$ is of the form $P\rt 0$ and $P\st{1}\rt P$, where $P$ is an indecomposable
projective $R$-module. In particular, one has the equalities $\proj\G=\inj\G=\proj\HH$.
Moreover, the equalities $\proj\M=\proj\Sc=\proj\HH=\proj\G$ hold trivially.
Since the functor $(-)'=\Hom_R(-, R):{\GG}(R)\rt{\GG}(R)$
is a duality, clearly it induces a duality between the full subcategories $\G$ and $\E$ of $\HH$ implying
the equalities $\proj \E=\inj \E=\inj\HH$, meaning that $\E$ is a Frobenius category.
\end{s}

\begin{s}{\sc (Pre)coves and (pre)envelopes.} Let ${\A}$ be an exact category. A morphism ${\f}:X\rt Y$ in $\A$
is right minimal if any endomorphism ${\g}:X\rt X$ satisfying ${\f\g=\f}$
is an automorphism; and left minimal if any endomorphism ${\h}:Y\rt Y$
such that ${\h\f=\f}$ is an automorphism. Let $\C$ be a full
subcategory of $\A$ and let $X$ be an object in $\A$.
A morphism ${\f}:M\rt X$ with $M\in\C$ is called a $\C$-precover of $X$,
if $\Hom_{\A}(1, {\f}):\Hom_{\A}(L, M)\lrt\Hom_{\A}(L, X)$ is surjective
for any  $L\in\C$; and a $\C$-cover if, in addition, ${\f}$ is right minimal.
In this case we may write $M=\C{\text-cov}~X$. Dually, we say that a morphism
${\bf h}:X\rt N$ with $N\in\C$ is a $\C$-preenvelope if
$\Hom_{\A}({\bf h}, 1):\Hom_{\A}(N, L)\rt\Hom_{\A}(X, L)$ is surjective
for any $L\in\C$; and a $\C$-envelope if, in addition, ${\h}$ is left minimal.
We call $N$ is as $\C {\text-env}~X$.
\end{s}

\begin{s}{\sc Almost split sequences.} Let $\mathcal{A}$ be an exact category and
${\f}:B\rt C$ be a morphism in $\mathcal{A}$.
${\f}$ is said to be right almost split provided it is not a split epimorphism and
every morphism ${\g}:X\rt C$ which is not a split epimorphism factors through ${\f}$.
A non split exact sequence ${\fo 0\rt A\st{\f}\rt B\st{\g}\rt C\rt 0}$ is called an
almost split provided the endomorphism ring of $A$ is local and ${\g}$ is right almost split.
We remark that, since this sequence is unique up to isomorphism for $A$ and for $C$,
we may write $A=\tau_{\A}{C}$ and $C=\tau_{\A}^{-1}A$; see \cite{ARS}.
The notion of almost split sequences was first defined by Auslander and Reiten in
\cite{AR4} during their study of module category of an artin algebra.
It was in this setting that they gave the first existence theorem for almost split sequences.
\end{s}

\begin{s}
Let ${\f}$ and ${\g}$ be arbitrary objects of $\M$.
We say that a morphism ${\bf u}:{\f}\rt{\g}$ is injectively trivial,
provided it factors through an injective object in $\M$.
We denote by $\bar{\M}$, {\sl the injectively stable category of $\M$},
namely, the objects of $\bar{\M}$ are the same
as those of $\M$, and for objects ${\f}, {\g}$ of $\M$,
the set of morphisms from ${\f}$ to ${\g}$ is defined by
$\Hom_{\bar{\M}}({\f, \g})=\overline{\Hom}_{\M}({\f, \g})=\Hom_{\M}({\f, \g})/{\mathfrak{I}_{\M}({\f, \g})}$,
whereas, $\mathfrak{I}_{\M}({\f, \g})$ consist of all injectively trivial maps from
${\f}$ to ${\g}$. For a given ${\bf u}\in\Hom_{\M}({\f, \g})$,
we shall denote by $\bar{\bf u}$, its image in $\Hom_{\bar{\M}}({\f, \g})$.
Analogously, projectively trivial maps in $\M$ and {\sl the projectively stable category of $\M$}, denoted by $\underline{\M}$,
is defined. In addition, for a given noetherin $R$-algebra $\Lambda$, the projectively stable
category of ${\rm mod}\Lambda$ will be denoted by $\underline{\rm mod}\Lambda$.
It is known that for a given $\Lambda$-homomorphism $f:M\rt N$ in
${\rm mod}\Lambda$, $\Omega^i_{\Lambda}(f):\Omega^i_{\Lambda}M\rt\Omega^i_{\Lambda}N$
is unique, up to projectively trivial maps and so, it is uniquely determined in
$\underline{\rm mod}\Lambda$.
\end{s}

\begin{remark}\label{rem4}Let ${\f}, {\g}:A\rt B$ be objects in $\M$ such that ${\g-\f}$
factorizes through a projective $R$-module $Q$. That is, there are $R$-homomorphisms
${\e}:A\rt Q$ and ${\bf u}:Q\rt B$ such that ${\f-\g}={\bf u}{\e}$.
The commutative diagram below in ${\rm mod}R$;
{\fo$$\begin{CD}A @>{1}>>A \\@V[{\f~~\e}]VV @V[{\g~~\e}]VV\\
B\oplus Q @>{\eta}>>B\oplus Q,\end{CD}$$} where $\eta=\tiny {\left[\begin{array}{ll} 1 & 0 \\ {\bf u} & {1} \end{array} \right],}$
induces that the objects $[{\f~~\e}]$ and $[{\g~~\e}]$ are isomorphic in $\M$.
\end{remark}

For a given object ${\h}$ in $\M$, we write $\red({\h})$ for the {\sl reduced} ${\h}$, i.e.,
the object obtained by deleting any non-trivial projective direct
summands from the representations of ${\h}$ in $\M$.
One should observe that if ${\h}$ belongs to $\G$ or $\E$,
then $\red({\h})$ is obtained by deleting any
non-trivial projective or injective direct summand of ${\h}$ in $\G$ or $\E$, respectively,
since these are Frobenius categories.

\begin{s}\label{s111}Let $(A\st{\f}\rt B)\in{\HH}$ be arbitrary.
Assuming ${\p'}:P\rt\coker({\f})$ as a projective cover of $\coker({\f})$,
there is an $R$-homomorphism ${\p}:P\rt B$ such that $\p'={\g}\p$,
where ${\g}:B\rt\coker({\f})$ is the canonical epimorphism.
It is fairly easy to see that $[{\f~~\p}]^t:A\oplus P\rt B$ is independent
of the choice of ${\p}$ and $[{\f~~\p}]^t\in\E$. Moreover, analogues to the
proof of Proposition 2.4 (2) of \cite{RS}, one may deduce that ${\f}\rt[{\f~~\p}]^t$
is an $\E$-envelope of ${\f}$.
\end{s}

\begin{s}\label{s4}Let $(A\st{\f}\rt A_1)$ and $(C\st{\h}\rt C_1)$ be arbitrary objects
in $\G$ and $\HH$, respectively. According to the fact that where
$(-)'=\Hom_R(- ,R) : \G \rightarrow \E$ is a duality,
one may easily deduce that the morphism ${\f}\rt{\h}$
is a $\G$-cover of ${\h}$ if and only if ${\h'}\rt{\f'}$ is an $\E$-envelope of ${\h'}$.
\end{s}

\begin{lemma}\label{s1}Every object of $\HH$ admits a $\G$-cover.
\end{lemma}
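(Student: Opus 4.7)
The plan is to reduce the statement to the dual situation via the duality $(-)'=\Hom_R(-,R):\G\rt\E$ recalled in \ref{s4}, and then appeal to the explicit construction of $\E$-envelopes given in \ref{s111}.

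Let $(A\st{\h}\rt B)\in\HH$ be arbitrary. First I would observe that its dual $(B'\st{\h'}\rt A')$ again lies in $\HH$, because the class $\GG(R)$ is closed under $(-)'$ (i.e. the $R$-dual of a Gorenstein projective is Gorenstein projective). Applying \ref{s111} to $\h'$ then yields an $\E$-envelope of $\h'$ inside $\M$: taking a projective cover $P\rt\coker(\h')$ and lifting it to a morphism $\p:P\rt A'$, the canonical morphism $\h'\rt [\h'~~\p]^t$, whose target is the epimorphism $[\h'~~\p]^t:B'\oplus P\rt A'$ in $\HH$, provides such an envelope.

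Next I would apply $(-)'$ once more to this envelope. Using the natural identification $\h''\cong\h$, which is available because $A,B\in\GG(R)$, this produces a morphism $([\h'~~\p]^t)'\rt\h$ in $\M$ whose source lies in $\G$ (being the $(-)'$-image of an object of $\E$). The equivalence in \ref{s4} now converts the envelope property of $\h'\rt[\h'~~\p]^t$ directly into the cover property of $([\h'~~\p]^t)'\rt\h$, producing the desired $\G$-cover of $\h$.

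I do not anticipate any substantive obstacle, as the argument is essentially a formal assembly of \ref{s111}, \ref{s4}, and the self-duality of $\GG(R)$ under $(-)'$. The only step meriting careful verification is that $(-)'$ genuinely transports right-minimality to left-minimality in the passage from envelope to cover; this is exactly the content encoded in \ref{s4}, so no further work is required beyond invoking it.
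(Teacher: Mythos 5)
Your argument is correct and is essentially identical to the paper's own proof: both dualize $\h$ to land in $\HH$, invoke the explicit $\E$-envelope construction of \ref{s111} for $\h'$, and then transport the envelope back to a $\G$-cover of $\h$ via the duality statement \ref{s4}. No gaps to report.
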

\begin{proof}Take an arbitrary object $(A\st{\f}\rt B)\in\HH$. As $(B'\st{\f'}\rt A')\in\HH$,
according to \ref{s111}, ${\f'}$ admits an $\E$-envelope ${\f'}\rt[\f'~~\p]^t$ in which
${\p}:P\rt A'$ with $P\rt\coker({\f'})$ is the projective cover of $\coker({\f'})$.
So by making use of \ref{s4}, $[\f~~\e]\rt {\f}$ will be a
$\G$-cover of ${\f}$, in which $\e=\p'$. One should observe that $\ke({\f})=(\coker({\f'}))'\rt P'$
is a projective envelope of $\ke({\f})$ and ${\e}:A\rt P'$ is its extension.
\end{proof}

\begin{s}\label{s6}{\sc Evaluation functor and its adjoints.} Let ${\Q:\bullet_1\rt\bullet_2}$ be a quiver.
Associated to any vertex $i\in\{1, 2\}$, there exists a functor $e^i:{\rm rep}(\Q, R)\rt {\rm mod}R$,
called the evaluation functor, which assigns to any representation $\X$ of $\Q$ its module
at vertex $i$, denoted $\X_i$. It is proved in \cite{eh} that $e^i$ possesses a right and also
a left adjoint, which is denoted by $e^i_{\rho}$ and $e^i_{\lambda}$, respectively.
We refer the reader to \cite{ah} for precise definition of these functors.
Now, since the categories $\M$ and ${\rm rep}(\Q, R)$ are equivalent, for a given projective $R$-module $P$,
the following isomorphisms hold true:\\
1) $\Hom_{\M}(e^1_{\lambda}(P), e^1_{\lambda}(R))\cong\Hom_R(P, R)\cong\Hom_{\M}(e^2_{\lambda}(P), e^2_{\lambda}(R))$.\\
2) $\Hom_{\M}(e^1_{\lambda}(P), e^2_{\lambda}(R))\cong\Hom_R(P, (e^2_{\lambda}(R))_1)\cong\Hom_R(P, 0)$.\\
3) $\Hom_{\M}(e^2_{\lambda}(P), e^1_{\lambda}(R))\cong\Hom_R(P, (e^1_{\lambda}(R))_2)\cong\Hom_R(P, R)$.
\end{s}

\begin{s}Let ${\Q:\bullet_1\rt\bullet_2}$ be a quiver with relations $e_1\alpha=\alpha=\alpha e_2$.
Let $(M\st{\f}\rt N)$ be an arbitrary object of $\M$. It is evident that $M\oplus N$ with the actions;
$(m, n).e_1=(m, 0)$, $(m, n).e_2=(0, n)$ and $(m, n).\alpha=(0, {\f}(m))$, for all $(m, n)\in M\oplus N$,
will be a right $R\Q$-module. So, one can find the following quasi-inverses functors;
\[\xymatrix{\M \ar@<1ex>[rr]^{i}&& {\rm mod}R\Q\ar@<1ex>[ll]^{i_e}} \]
in which, $i({\f})=M\oplus N$ for any $(M\st{\f}\rt N)\in\M$, whereas, $i_e(X)=Xe_1\st{\alpha}\rt Xe_2$
for all $X\in{\rm mod}R\Q$, the category of right $R\Q$-modules. Inspired by this fact,
we identify the objects of $\M$ with those in $i_e({\rm mod}R\Q)$.
Analogously, for a given object $(M\st{\f}\rt N)\in\M$, $M\oplus N$ is an $R\Q^{\rm op}$-module.
In particular, we have also the following quasi-inverses functors;
\[\xymatrix{\M \ar@<1ex>[rr]^{j}&& {\rm mod}R\Q^{\rm op}\ar@<1ex>[ll]^{j_e}.} \]
By an object in $\M^{\rm op}$ we mean an object in $j_e({\rm mod}R\Q^{\rm op})$,
that is, $(M\st{\f}\rt N)$ being in $\M^{\rm op}$ means that ${\f}=(e_2X\rt e_1X)$, where $X=M\oplus N$.
\end{s}

\section{Auslander transpose in submodule category}

As we have mentioned in the introduction, the Auslander transpose plays a crucial role
in various contexts, such as linkage theory and representation theory.
Moreover, it has been long known that the submodule category is much more complicated than
its underlying module theory. So specifying the Auslander transpose in
submodule category within ${\rm mod}R$ will help us in many investigations.
Proposition \ref{lem16} may be constructed in this direction.
We begin with the following easy observation which is needed for our later investigation.
\begin{lemma}\label{lem11}Let $(A\st{\f}\rt B)\in \Sc$ be arbitrary. Then
${\f}$ admits a projective cover, moreover,
for each $i\geq 0$, there is a projective $R$-module $Q$ such that $\Omega^i_{\M}({\f})=(\Omega^i A\st{[\Omega^i_R{\f}~~e]}\lrt\Omega^i B \oplus Q)$.

\end{lemma}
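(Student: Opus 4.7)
The plan is to build the projective cover of ${\f}$ in $\M$ by hand from projective covers in ${\rm mod}\,R$ and to read off the syzygies via the snake lemma. Since $R$ is commutative noetherian local, hence semiperfect, take projective covers $p_A:P_A\rt A$ and $p_C:P_C\rt\coker({\f})$, and lift $p_C$ through the surjection $B\twoheadrightarrow\coker({\f})$ to obtain $\tilde p:P_C\rt B$. Form the object $\Pi:=(P_A\st{\binom{1}{0}}\lrt P_A\oplus P_C)$ of $\M$, which is projective by \ref{s3}, together with the morphism $\pi:\Pi\rt{\f}$ having components $(p_A,[{\f}p_A,\tilde p])$. Surjectivity in each coordinate is clear. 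To see right-minimality, any endomorphism $\phi$ of $\Pi$ compatible with $\pi$ must respect the splitting $P_A\oplus P_C$; writing its second component as a $2\times 2$ upper-triangular block matrix (the lower-left block is forced to vanish by commutativity with $\binom{1}{0}$), the diagonal entries are forced to be automorphisms by right-minimality of $p_A$ and $p_C$.

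The key step is the identification of $\Omega^1_{\M}{\f}$. Taking kernels componentwise yields $\Omega^1_{\M}{\f}=(\Omega^1 A\rt K_2)$, where $K_2:=\ker([{\f}p_A,\tilde p])$. Assembling the projective covers into a morphism of short exact sequences
$$\begin{CD} 0 @>>> P_A @>>> P_A\oplus P_C @>>> P_C @>>> 0 \\ @. @Vp_AVV @V{[{\f}p_A,\tilde p]}VV @Vp_CVV \\ 0 @>>> A @>{\f}>> B @>>> \coker({\f}) @>>> 0 \end{CD}$$
the snake lemma produces $0\rt\Omega^1 A\rt K_2\rt\Omega^1\coker({\f})\rt 0$. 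The middle map is a projective surjection onto $B$, not in general minimal, so it factors through the projective cover $p_B:P_B\rt B$ by a split epimorphism, giving a decomposition $P_A\oplus P_C\cong P_B\oplus Q$ with $Q$ projective, and hence $K_2\cong\Omega^1 B\oplus Q$. Under this identification the induced map $\Omega^1 A\rt K_2$ has first coordinate equal to $\Omega^1_R{\f}$, because the composite $P_A\rt P_A\oplus P_C\rt P_B$ is a valid lift of ${\f}p_A$ along $p_B$, and some second coordinate $e:\Omega^1 A\rt Q$.

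For higher $i$ we proceed by induction. The arrow $\Omega^1 A\rt K_2$ is the restriction of the injection $\binom{1}{0}$, hence injective, so $\Omega^1_{\M}{\f}\in\Sc$ and the construction iterates. Applying the case $i=1$ to $\Omega^1_{\M}{\f}$ and using that $\Omega^1$ absorbs projective summands (so $\Omega^1(\Omega^1 B\oplus Q)\cong\Omega^2 B$), one obtains
$$\Omega^i_{\M}{\f}\cong\bigl(\Omega^i A\st{[\Omega^i_R{\f},\,e_i]}\lrt\Omega^i B\oplus Q_i\bigr)$$
for suitable projective $Q_i$ and maps $e_i$. The one place where care is needed is the bookkeeping of lifts: the maps produced in successive rounds of the construction must be checked to agree with the chosen lifts defining $\Omega^i_R{\f}$ in ${\rm mod}\,R$, which is automatic once compatible lifts are fixed from the outset.
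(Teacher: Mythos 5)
Your proof is correct and follows essentially the same route as the paper: build the projective cover of ${\f}$ from the projective covers of $A$ and $\coker({\f})$ via the object $(P_A\rt P_A\oplus P_C)$, then identify the componentwise kernel using the snake lemma and the splitting $P_A\oplus P_C\cong P_B\oplus Q$ over the projective cover of $B$. You merely supply more detail than the paper does on the right-minimality check and on why the first coordinate of the induced map is $\Omega^1_R{\f}$, which is a welcome addition but not a different argument.
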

\begin{proof}Set, for simplicity, $C=\coker{\f}$ and ${\g}=\Coker{\f}$.
If $i=0$, there is nothing to prove. So assume that $i=1$.
Take the following commutative diagram with exact rows and columns in ${\rm mod} R$;
\[\xymatrix@C-0.5pc@R-.8pc{&0\ar[d]&0 \ar[d] &0 \ar[d] &&\\0\ar[r]& \Omega^1 A
 \ar[r] \ar[d] & \Omega^1 B\oplus Q  \ar[d]\ar[r] & \Omega^1 C  \ar[d]\ar[r]&
0& \\ 0 \ar[r]  & P_0 \ar[r] \ar[d]^{\bf u} & P_0\oplus Q_0 \ar[d]^{[{\f}{\bf u}~~{\bf w}]^t} \ar[r] & Q_0 \ar[d]^{\bf v} \ar[r]&0&\\ 0\ar[r] & A\ar[r]^{\f} \ar[d] & B\ar[r]^{\g} \ar[d] &C \ar[r] \ar[d]&0&\\ &0& 0& 0& &\\}\] in which
${\bf u}:P_0\rt A$ and ${\bf v}:Q_0\rt C$ are projective covers
of $R$-modules $A$ and $C$, and ${\g}{\bf w}={\bf v}$. It is straightforward to see that
$0\rt P_0\rt P_0\oplus Q_0\rt Q_0\rt 0$ is a projective cover of the short exact
sequence $0\rt A\st{\f}\rt B\st{\g}\lrt C\rt 0$ in the category of
 complexes over ${\rm mod}R$. Using this technique, one can see that
 $(P_0\st{[1~~0]}\rt P_0\oplus Q_0)\rt (A\st{\f}\rt B)$ is a
 projective cover of $(A\st{\f}\rt B)$ in $\M$, meaning that
 $\Omega^1_{\M}({\f})=(\Omega^1 A\st{[\Omega^1_R{\f}~~e]}\lrt\Omega^1 B\oplus Q)$.
 Hence, iterating this argument gives the desired result.
\end{proof}

Let $\Q:\bullet_1\rt\bullet_2$ be a quiver and $R\Q$ be the path algebra.
According to our convention, $i_e(R\Q)=(Re_1\rt Re_2\oplus R\alpha)=(R\rt R\oplus R)={\bf r}$.
We use the notation $(-)^*$ to denote the dual functor $\Hom_{\M}(-,{\bf r})$.
Also, as pointed out before $(-)'=\Hom_R(-, R)$. It can be easily checked that for any object
${\f}\in\M$, $\Hom_{\M}({\f}, Re_1\rt Re_2\oplus R\alpha)={\f}^{*}$ is an $R\Q^{\rm op}$-module.

\begin{lemma}Let $P$ be a projective $R$-module. Then with the notation and assumption of the above,
the following isomorphisms hold true.
\begin{itemize}\item[$(1)$] $j_e(P\st{1}\rt P)^*\cong(0\rt P')$.
\item[$(2)$]$j_e(0\rt P)^*\cong(P'\st{1}\rt P')$.
\end{itemize}
\end{lemma}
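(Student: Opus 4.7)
The plan is to identify ${\bf r}$ with the regular right $R\Q$-module, decompose it via the primitive idempotents, and then feed each summand into the adjunction isomorphisms recorded in \ref{s6}.

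\textbf{Decomposition of ${\bf r}$.} First I would observe that under $i_e$ the standard decomposition $R\Q=e_1R\Q\oplus e_2R\Q$ translates into ${\bf r}\cong e^1_{\lambda}(R)\oplus e^2_{\lambda}(R)$, since the right $R\Q$-modules $e_1R\Q=Re_1\oplus R\alpha$ and $e_2R\Q=Re_2$ correspond respectively to $(R\xrightarrow{1}R)$ and $(0\to R)$. Consequently, for any $\f\in\M$,
$$\f^{*}\;=\;\Hom_{\M}(\f,e^1_{\lambda}(R))\;\oplus\;\Hom_{\M}(\f,e^2_{\lambda}(R)),$$
and these two summands are exactly the idempotent pieces $\f^{*}e_1$ and $\f^{*}e_2$ of $\f^{*}$ regarded as a right $R\Q^{\rm op}$-module.

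\textbf{Evaluating the two parts.} For (1), I would take $\f=e^1_{\lambda}(P)=(P\xrightarrow{1}P)$. Isomorphism (1) of \ref{s6} then gives $\f^{*}e_1\cong\Hom_R(P,R)=P'$, while isomorphism (2) gives $\f^{*}e_2\cong\Hom_R(P,0)=0$. Recalling that $j_e$ places the vertex-$2$ component as the source of the arrow, one reads off $j_e(\f^{*})=(0\to P')$, the connecting map being forced to vanish. For (2), taking $\f=e^2_{\lambda}(P)=(0\to P)$, isomorphism (3) of \ref{s6} gives $\f^{*}e_1\cong P'$ and isomorphism (1) gives $\f^{*}e_2\cong P'$, so $j_e(\f^{*})$ has the shape $(P'\to P')$ and only the identification of the arrow remains.

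\textbf{Identifying the arrow in (2).} I expect this last step to be the main obstacle, as it requires unwinding the two adjunctions simultaneously with the $\alpha$-action. The structure map $\f^{*}e_2\to\f^{*}e_1$ is left multiplication by $\alpha\in R\Q$. A morphism $f\in\f^{*}e_2$ factors through $e^2_{\lambda}(R)=(0\to R)$ and corresponds to some $\phi\colon P\to R$ via $f(p)=\phi(p)e_2$; then $(\alpha f)(p)=\phi(p)\alpha\in R\alpha$. After identifying the underlying module of $e^1_{\lambda}(R)=(R\xrightarrow{1}R)$ with $Re_1\oplus R\alpha$, the element $\alpha f$ corresponds again to $\phi$ under the isomorphism $\Hom_{\M}(e^2_{\lambda}(P),e^1_{\lambda}(R))\cong\Hom_R(P,R)$ of \ref{s6}(3). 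Hence the connecting arrow $P'\to P'$ is the identity, and (2) follows.
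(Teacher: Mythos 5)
Your proposal is correct and follows essentially the same route as the paper: decompose ${\bf r}=(Re_1\to Re_2\oplus R\alpha)$ into the summands $(Re_1\to R\alpha)\cong(R\to R)$ and $(0\to Re_2)$, identify the idempotent pieces $e_i\f^{*}$ with $\Hom_{\M}(\f,-)$ of these summands, and evaluate via the adjunction isomorphisms of \ref{s6}. The only difference is that you also verify explicitly that the connecting map in part $(2)$ is the identity, a point the paper leaves to the reader with ``the second one follows similarly.''
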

\begin{proof}We only prove the first assertion, the second one follows similarly.
To this end, take the following isomorphism;
$$j_e(P\rt P)^*\cong e_2\Hom_{\M}(P\rt P, Re_1\rt Re_2\oplus R\alpha)\lrt
e_1\Hom_{\M}(P\rt P, Re_1\rt Re_2\oplus R\alpha).$$ On the other hand, one has the sequel isomorphism;
$$e_2\Hom_{\M}(P\rt P, Re_1\rt Re_2\oplus R\alpha)=\Hom_{\M}(P\rt P, 0\rt Re_2)\cong\Hom_{\M}(P\rt P, 0\rt R),$$
where the right hand side is isomorphic to $\Hom_{\M}(e^1_{\lambda}(P), e^2_{\lambda}(R))$, and so, according to \ref{s6},
this is equal to zero. Analogously, we have
\[\begin{array}{lllll}
e_1\Hom_{\M}(P\rt P, Re_1\rt Re_2\oplus R\alpha) & = \Hom_{\M}(P\rt P, Re_1\rt R\alpha) \\& \cong\Hom_{\M}(P\rt P, R\rt R)\\
&\cong\Hom_{\M}(e^1_{\lambda}(P), e^1_{\lambda}(R)),
\end{array}\]
in which the last one is equal to $P'$, thanks to \ref{s6}. The proof then is complete.
\end{proof}

\begin{s}{\sc Transpose in morphism category.} Let $(A\st{\f}\rt B)\in\Sc$ be given.
Assume that {\fo\[ \xymatrix@R-2pc {  &  ~P_1\ar[dd]~   & P_0\ar[dd]~  & A\ar[dd]^{\f} \\
 &  _{ \ \ \ \ } \ar[r]  \ar[r]  &  \ar[r] &  _{\ \ \ \ \ }\ar[r] & 0, \\ & P_1\oplus Q_1 & P_0\oplus Q_0 & B }\]}
is a minimal projective presentation of ${\f}$ in $\M$. So, by applying the functor $(-)^*$ to this
and using the previous lemma, one may obtain the following exact sequence in $\M^{\rm op}$;
{\fo\[ \xymatrix@R-2pc { & &  ~Q_0'\ar[dd]~   & Q_1'\ar[dd]~  \\ 0\ar[r]~&j_e(\f^*)
 \ar[r]  \ar[r]  &  \ar[r]^{\h} & . \\& & P_0'\oplus Q_0' & P_1'\oplus Q_1'}\]}
Now, the Auslander transpose of ${\f}$ in $\M$, denoted $\Tr_{\M}(\f)$, is defined as $\coker(\h)$.
\end{s}

The result below which describes the Auslander transpose in the morphism category,
plays an essential role in the remainder of the paper. We should mention
the Auslander transpose in ${\rm mod}R$ is simply denoted by $\Tr$.

\begin{prop}\label{lem16}Let $(A\st{\f}\rt B)\in\Sc$ be arbitrary.
Set $C=\coker{\f}$ and ${\g}=\Coker{\f}$. Then there exist a projective $R$-module $Q$
and $R$-homomorphisms ${\h}_1:\Tr C\rt Q$ and ${\h}_2:Q\rt \Tr A$
such that
\begin{itemize}
\item[$(1)$]$\Tr_{\M}(\f)=(\Tr$$ C\st{[{\Tr\g}~~{\h}_1]}\lrt \Tr B \oplus Q$),
and the sequence $\Tr C \st{[{\Tr\g}~~{\h}_1]}\lrt \Tr B \oplus Q \st{[{\Tr\f}~~{\h}_2]^t}\lrt \Tr A \rt 0$ is exact.
\item[$(2)$]If $\Ext^{1}_R(C, R)=0$, then $\Tr_{\M}(\f)\in\Sc$.
\end{itemize}
\end{prop}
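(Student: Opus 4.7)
The plan is to compute $\Tr_{\M}{\f}$ by dualizing the explicit minimal projective presentation of ${\f}$ in $\M$ produced by Lemma~\ref{lem11} and reading off the cokernel componentwise. By that lemma, together with a standard horseshoe construction applied at each step, the minimal projective presentation of ${\f}$ has the shape
\[
(P_1\st{[1\ 0]}\rt P_1\oplus Q_1)\st{\delta}\lrt(P_0\st{[1\ 0]}\rt P_0\oplus Q_0)\lrt{\f}\lrt 0,
\]
where $P_1\rt P_0\rt A\rt 0$ and $Q_1\rt Q_0\rt C\rt 0$ are the minimal projective presentations of $A$ and $C=\coker{\f}$, and the middle component of $\delta$ has matrix form $\left[\begin{smallmatrix} d^A & \sigma \\ 0 & d^C\end{smallmatrix}\right]:P_1\oplus Q_1\rt P_0\oplus Q_0$ for some connecting map $\sigma:Q_1\rt P_0$. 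In particular, the middle row furnishes a (generally non-minimal) projective presentation of $B$.

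Applying $(-)^*$ and using the preceding lemma to identify $(P\st{[1\ 0]}\rt P\oplus Q)^*$ with an object of $\M^{\op}$ whose first and second components are $Q'$ and $P'\oplus Q'$, respectively, the morphism $\delta^*$ becomes, componentwise, $(d^C)':Q_0'\rt Q_1'$ and $\left[\begin{smallmatrix}(d^A)' & 0\\ \sigma' & (d^C)'\end{smallmatrix}\right]:P_0'\oplus Q_0'\rt P_1'\oplus Q_1'$. The first cokernel is $\Tr C$ by definition, while the second equals $\Tr B\oplus Q$ for some projective $R$-module $Q$ absorbing the non-minimality of the $B$-presentation. The structural arrow $\Tr C\rt\Tr B\oplus Q$ is induced by the inclusion $Q_\bullet'\hookrightarrow P_\bullet'\oplus Q_\bullet'$ into the second summand; its projection onto $\Tr B$ is visibly $\Tr{\g}$, which yields the formula in~(1). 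The three-term exactness $\Tr C\rt\Tr B\oplus Q\rt\Tr A\rt 0$ then follows from the snake lemma applied to the short exact sequence of dualized projective resolutions $0\rt(P^C_\bullet)'\rt(P^B_\bullet)'\rt(P^A_\bullet)'\rt 0$.

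For~(2), I would compute the kernel of $[\Tr{\g}\ \,{\h}_1]$ directly from the matrix form: a class $[q]\in\Tr C$ is killed iff $q=\sigma'(p)+(d^C)'(q_0)$ with $p\in\ker((d^A)')=A'$, so the kernel equals the image of the map $A'\rt\Tr C$ induced by $\sigma'$. Comparing with the horseshoe setup, this map coincides with the composite $A'\rt\Ext^1_R(C,R)\hookrightarrow\Tr C$, whose first arrow is the connecting homomorphism in the long exact $\Hom_R(-,R)$-sequence attached to $0\rt A\rt B\rt C\rt 0$. Hence the hypothesis $\Ext^1_R(C,R)=0$ forces the kernel to vanish, proving $\Tr_{\M}{\f}\in\Sc$. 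The main obstacle of the plan is the bookkeeping of the $\M^{\op}$-structure through the dualization and the identification of the explicit kernel with the $\Ext$ connecting homomorphism; once the matrix form of $\delta$ is in hand, both identifications reduce to routine horseshoe calculations.
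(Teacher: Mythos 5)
Your proposal is correct and follows essentially the same route as the paper: dualize the horseshoe-shaped minimal projective presentation of ${\f}$ supplied by Lemma~\ref{lem11}, identify the components of the resulting object of $\M^{\rm op}$ via the duals of the indecomposable projectives, and obtain the three-term exactness from the snake lemma applied to $0\rt (Q_\bullet)'\rt (P_\bullet\oplus Q_\bullet)'\rt (P_\bullet)'\rt 0$. The only (cosmetic) difference is in part~(2), where you recompute $\ker[\Tr{\g}~~{\h}_1]$ elementwise and identify it with the image of the connecting map $A'\rt\Ext^1_R(C,R)\subseteq\Tr C$, whereas the paper reads the same vanishing off the already-established six-term exact sequence from the surjectivity of $B'\rt A'$.
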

\begin{proof}$(1)$. Take a minimal projective presentation of $(A\st{\f}\rt B)$ in $\M$ as follows;
{\fo\[ \xymatrix@R-2pc {  &  ~P_1\ar[dd]~   & P_0\ar[dd]~  & A\ar[dd]^{\f} \\  &  _{ \ \ \ \ } \ar[r]  \ar[r]  &  \ar[r] &  _{\ \ \ \ \ }\ar[r] & 0, \\ & P_1\oplus Q_1 & P_0\oplus Q_0 & B }\]} where, by Lemma \ref{lem11}, $P_1\rt P_0\rt A\rt 0$ and $Q_1\rt Q_0\rt C\rt 0$
are minimal projective presentations of $A$ and $C$.
Applying the functor $(-)^*=\Hom_{\M}(-, {\bf r})$ to this sequence
yields the following exact sequence in $\M^{\op}$;
{\fo\[ \xymatrix@R-3pc { &  &   ~~   Q_0'\ar[dd]~  & Q_1'\ar[dd] & \\
0\ar[r] &j_{e}{\left(\begin{array}{lll} A \\ \downarrow\\ B \end{array} \right)^*}\ar[r]  &_{\ \ \ \ \ } \ar[r] _{\ \ \ \ \ }&  _{\ \ \ \ \ }\ar[r] & \Tr_{\M}({\f}) \ar[r]& 0.~~~~~~~~_{\ \ \ \ \ }(\dag) \\ & &  P_0'\oplus Q_0' & P_1'\oplus Q_1'&  }\]}
Consequently, $\Tr_{\M}({\f})=(\Tr C\st{[{\Tr\g}~~{\h}_1]}\lrt \Tr B \oplus Q$), for some projective $R$-module $Q$.
Next, consider the following commutative diagram of $R$-modules with exact rows;
{\footnotesize{$$\begin{CD}
0 @>>> Q'_0 @>>> \ Q'_0\oplus P'_0 @>>> P'_0 @>>> 0\\
& & @V VV @V VV @V VV & &\\ 0 @>>> Q'_1 @>>>Q'_1\oplus P'_1 @>>> P'_1
@>>> 0.\end{CD}$$}}
Applying Snake lemma gives us the following exact sequence;
$$0\rt C'\rt B'\st{\f'}\rt A'\rt\Tr C \st{[{\Tr\g}~~{\h}_1]}\lrt \Tr B \oplus Q \st{[{\Tr\f}~~{\h}_2]^t}\lrt \Tr A
\rt 0._{\ \ \ \ \ \ \ \  \ \ \ \ }(\dag)$$ So the claim follows. \\
$(2)$. Since $\Ext_{R}^1(C, R)=0$, one may obtain the short exact sequence of $R$-modules;
$0\rt C'\rt B'\st{\f'}\rt A'\rt 0.$
So by making use of the exact sequence $(\dag)$, one infers that
the sequence $0\rt \Tr C \st{[{\Tr\g}~~{\h}_1]}\lrt \Tr B \oplus Q \st{[{\Tr\f}~~{\h}_2]^t}\lrt \Tr A \rt 0$
is exact, as desired.
\end{proof}

The following result indicates that if two objects ${\f, \g}:A\rt B$
differ only by a map which factorizes through a projective $R$-module,
then they have $\G$-covers simultaneously, and in particular,
their $\G$-covers are isomorphic, up to projective summand.

\begin{prop}\label{pro9}Let ${\f, \g}:A\rt B$ be arbitrary objects of $\M$ such that
$\f-\g$ factors through a projective $R$-module. If $\f$ admits a $\G$-cover, then so does $\g$
and one has the isomorphism; $\red(\G{\text- cov~{\f}})\cong\red(\G{\text- cov~{\g}})$.
\end{prop}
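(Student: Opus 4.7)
The plan is to write $\f-\g={\bf u}\e$ with $\e:A\rt Q$, ${\bf u}:Q\rt B$, and $Q$ a projective $R$-module, and to let $\phi=(\phi_0,\phi_1):(X_0\st{\chi}\rt X_1)\rt\f$ denote a $\G$-cover of $\f$. Since the source lies in $\G$, $\coker\chi\in\GG(R)$, whence $\Ext^1_R(\coker\chi,Q)=0$; consequently $\e\phi_0:X_0\rt Q$ lifts through $\chi$ to a map $\tilde{\e}:X_1\rt Q$ with $\tilde{\e}\chi=\e\phi_0$. The strategy is to use $\phi$ and $\tilde{\e}$ to build an explicit $\G$-precover of $\g$ whose source differs from $X$ by just a single projective summand $(0\rt Q)$, and then to conclude by Krull--Schmidt considerations that $\red$ collapses this extra summand.

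Concretely, consider the morphism $\psi:(X_0\st{\chi}\rt X_1)\oplus(0\rt Q)\rt\g$ whose restriction to $(X_0\rt X_1)$ is $(\phi_0,\phi_1-{\bf u}\tilde{\e})$ and whose restriction to $(0\rt Q)$ is $(0,{\bf u})$; commutativity of the defining squares follows from $\tilde{\e}\chi=\e\phi_0$. To verify that $\psi$ is a $\G$-precover, take any $\rho=(\rho_0,\rho_1):Y\rt\g$ with $Y=(Y_0\st{\chi_Y}\rt Y_1)\in\G$, and apply the same $\Ext^1$-vanishing for $\coker\chi_Y$ to lift $\e\rho_0:Y_0\rt Q$ through $\chi_Y$ to a map $\mu:Y_1\rt Q$. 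Then $(\rho_1+{\bf u}\mu)\chi_Y=\g\rho_0+{\bf u}\e\rho_0=\f\rho_0$, so $(\rho_0,\rho_1+{\bf u}\mu):Y\rt\f$ becomes a bona fide morphism and factors as $\phi(\sigma_0,\sigma_1)$ by the precover property of $\phi$. A direct substitution shows that the pair $\tau=(\sigma_0,(\sigma_1,\tilde{\e}\sigma_1-\mu))$ defines a morphism $Y\rt(X_0\rt X_1)\oplus(0\rt Q)$ in $\M$ and that $\psi\tau=\rho$.

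The symmetric argument, obtained by interchanging $\f$ and $\g$ and using $\g-\f=-{\bf u}\e$, shows that a $\G$-cover of $\g$ likewise yields a $\G$-precover of $\f$. Since $\M\simeq{\rm mod}\,R\Q$ is Krull--Schmidt, every $\G$-precover contains the $\G$-cover as a direct summand, so $\G\text{-cov}~\g$ exists as a direct summand of $(X_0\rt X_1)\oplus(0\rt Q)$ and, analogously, $\G\text{-cov}~\f$ is a direct summand of $\G\text{-cov}~\g\oplus(0\rt Q)$. Applying $\red$ kills the projective summand $(0\rt Q)\in\proj\M$, so the two reduced covers become mutual direct summands of one another and hence isomorphic. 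The principal technical obstacle is expected to be the precover verification in the second paragraph: one might naively try to use just $(\phi_0,\phi_1-{\bf u}\tilde{\e}):X\rt\g$, but the resulting factorization equation reduces to a fixed-point condition linking the choice of lift $\mu$ with the factorization $\sigma$, and it is precisely the addition of the summand $(0\rt Q)$ together with the map $(0,{\bf u})$ that supplies the extra degrees of freedom needed to resolve this.
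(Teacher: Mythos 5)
Your argument is correct and rests on the same underlying idea as the paper's proof --- the difference ${\f-\g}$ gets absorbed into a projective summand $(0\rt Q)$ of the source of the cover, which $\red$ then deletes --- but the execution is genuinely different. The paper introduces the auxiliary object $[{\f~~\e}]:A\rt B\oplus Q$, observes via Remark \ref{rem4} that $[{\f~~\e}]\cong[{\g~~\e}]$ in $\M$, and relates the $\G$-covers of ${\f}$ and of $[{\f~~\e}]$ through the exact sequence $0\rt(0\rt Q)\rt[{\f~~\e}]\rt{\f}\rt 0$ and a pullback which splits because $\Ext^1_{\M}(-,(0\rt Q))$ vanishes on $\G$; the symmetry between ${\f}$ and ${\g}$ is then automatic from the isomorphism of the two auxiliary objects. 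You instead construct the $\G$-precover of ${\g}$ with source $X\oplus(0\rt Q)$ completely explicitly, using the lift $\tilde{\e}$ supplied by $\Ext^1_R(\coker\chi,Q)=0$, and your verification of the precover property and of $\psi\tau=\rho$ is correct; in effect you have unwound the paper's composite of pullback, isomorphism and projection into a single formula. What your version buys is transparency, and in particular you are explicit about the final step --- passing from a precover to a cover and cancelling the summand $(0\rt Q)$ --- which genuinely uses the Krull--Schmidt property of $\M$ (hence completeness of $R$, an assumption the paper only imposes a few lines after this proposition); the paper's assertion that the pullback diagram ``gives rise to'' $\red(\G\text{-cov}~{\f})\cong\red(\G\text{-cov}~[{\f~~\e}])$ relies on the same cancellation silently, since the pulled-back precover need not be right minimal on the nose. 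What the paper's version buys is brevity and a reusable intermediate fact, namely the isomorphism $[{\f~~\e}]\cong[{\g~~\e}]$ of Remark \ref{rem4}.
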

\begin{proof}
Take a projective $R$-module $Q$ and $R$-homomorphisms ${\e}:A\rt Q$ and ${\e'}:Q\rt B$ such that
${\f-\g}={\e'}{\e}$.
Consider the following short exact sequence in $\M$;
{\fo \[ \xymatrix@R-2pc {  &  ~ 0 \ar[dd]~   & A\ar[dd]^{[{\f~~\e}]}~~  & A\ar[dd]^{\f} \\ 0 \ar[r] &  _{ \ \ \ \ } \ar[r]_{[0~~1]}  &_{\ \ \ \ \ } \ar[r]^{1} _{[1~~0]^t}{\ \ \ \ }&  _{\ \ \ \ \ }\ar[r] & 0._{\ \ \ \ \ \ \ \ \  \  }(\dagger) \\ & Q & B\oplus Q & B }\]}
Suppose that ${\h\rt\f}$ is a $\G$-cover of ${\f}$. So the following pull-back diagram
{\footnotesize{$$\begin{CD}
0 @>>> (0\rt Q) @>>> \ (0\rt Q)\oplus {\h} @>>> {\h} @>>> 0\\
& & @V VV @V VV @V VV & &\\ 0 @>>> (0\rt Q) @>>>[{\f~~\e}] @>>> {\f}
@>>> 0,\end{CD}$$}}
gives rise to an isomorphism $\red(\G{\text- cov}~{\f})\cong\red(\G{\text-cov}~[{\f~~\e}])$.
One should note that since $(0\rt Q)$ is a projective object in $\M$,
according to the short exact sequence $(\dagger)$, one may easily infer that
${\f}$ admits a $\G$-cover if and only if so does $[{\f~~\e}]$.
This fact in conjunction with Remark \ref{rem4} gives the desired result.
\end{proof}

\begin{remark}\label{rem2}(1). Let $(A\st{\f}\rt B)$ be an object of $\M$ which is determined
uniquely, up to projectively trivial maps. If ${\f}$ admits a $\G$-cover, then $\red(\G{\text- cov~{\f}})$
is uniquely determined, up to isomorphism, thanks to Proposition \ref{pro9}.\\
(2). Let ${\f, \g}:B\rt C$ be indecomposable objects in $\E$ such that ${\f-\g}$
factorizes over a projective $R$-module. According to the fact that $(-)'$ is a duality on $\HH$, one may have the isomorphism $\red({\E}{\text-env}~\Omega^i_R{\f})\cong\red({\E}{\text-env}~\Omega^i_R{\g})$.
\end{remark}

From now to the end of this section, we additionally assume that $R$
is complete.

Since both functors transpose and syzygy are essential in linkage theory
and representation theory, in the remainder of this section
we are going to describe syzygies of transpose of certain
objects in $\M$ via $\G$-covers.

\begin{s}\label{s11}We would like to add one remark here by stating the following
two assertions:\\ (i) Assume that $(A\st{\f}\rt B)\in\Sc$ is given.
According to Lemma \ref{lem11}, $f$ admits a minimal projective
resolution in $\M$. So, for any $i\ge 0$, $\Omega^i_{\M}({\f})$ is
uniquely determined up to isomorphism. Moreover, if ${\f}$ is further assumed
to be in $\G$, then $\Omega^i_{\M}({\f})=\Omega^i_{\HH}({\f})$, because $\proj\M=\proj\HH$.
\\ (ii) Since $\M$ is equivalent to ${\rm mod}R\Q$, where $\Q:\bullet\rt\bullet$
is a quiver, by \cite[Exercise 6]{re}, $\M$ has Krull-Schmidt property.
Moreover, as $\HH$ is a full subcategory of $\M$ which is closed
under direct summands, $\HH$ has Krull-Schmidt property as well.
\end{s}

\begin{prop} \label{lem8} Let $ 0 \rt A\st{\f}\rt B\st{\g}\rt C\rt 0$ be an  exact sequence of modules in ${\GG}(R)$.
Then the following assertions hold.
\begin{itemize}\item [$(i)$]$\End_{\HH}({\f})$ is a local ring if and only if $\End_{\HH}({\g})$ is so.
\item [$(ii)$]
If ${\f}$ is indecomposable non-projective as an object of $\HH$,
then $\Omega^i_{\HH}({\f})\cong \red(\G{\text- cov}~\Omega^i_{R}{\f})$ and
${\Coker}(\Omega^i_{\HH}({\f}))\cong  \red(\E{\text- env}~\Omega^i_{R}{\g})$.
\end{itemize}
\end{prop}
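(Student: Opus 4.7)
The plan is to handle part $(i)$ via the universal properties of kernel and cokernel, and part $(ii)$ by combining the explicit description of syzygies in $\M$ from Lemma \ref{lem11} with the $\G$-cover/$\E$-envelope machinery from Lemma \ref{s1} and \ref{s111}.

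For $(i)$, I would construct mutually inverse ring homomorphisms between $\End_{\HH}({\f})$ and $\End_{\HH}({\g})$. An endomorphism $(u,v)$ of ${\f}$ in $\HH$ satisfies $v{\f} = {\f}u$; since ${\g}{\f} = 0$, the identity ${\g}v{\f} = 0$ together with the universal property of the cokernel ${\g}$ produces a unique $w \colon C \to C$ with $w{\g} = {\g}v$, yielding $(v,w) \in \End_{\HH}({\g})$. Conversely, given $(v,w)$ with $w{\g} = {\g}v$, the relation ${\g}v{\f} = w{\g}{\f} = 0$ combined with injectivity of ${\f}$ yields a unique $u \colon A \to A$ with ${\f}u = v{\f}$. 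These two assignments are mutually inverse ring homomorphisms, so $\End_{\HH}({\f}) \cong \End_{\HH}({\g})$, and locality transfers.

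For the first equation of $(ii)$, note that ${\f} \in \G$ (since $A$ and $C = \coker({\f})$ are Gorenstein projective and ${\f}$ is monic), so by \ref{s11}(i) and Lemma \ref{lem11}, $\Omega^i_{\HH}({\f}) = \Omega^i_{\M}({\f}) = (\Omega^i A \xrightarrow{[\Omega^i_R {\f} ~ e]} \Omega^i B \oplus Q)$ for some projective $R$-module $Q$ and map $e \colon \Omega^i A \to Q$; this object lies in $\G$ and is reduced by minimality of the resolution. I would then define $\phi := (1_{\Omega^i A}, \pi) \colon \Omega^i_{\HH}({\f}) \to \Omega^i_R {\f}$ in $\M$, with $\pi \colon \Omega^i B \oplus Q \twoheadrightarrow \Omega^i B$ the projection, and verify that $\phi$ is a $\G$-precover: given $(\alpha, \beta) \colon h \to \Omega^i_R{\f}$ in $\M$ with $h \in \G$, the lift must have first component $\alpha$, while the second component requires extending $e\alpha$ along the monic $h$; such an extension exists since $\coker h \in \GG(R)$ and $Q$ is projective, so $\Ext^1_R(\coker h, Q) = 0$. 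By Krull--Schmidt (\ref{s11}(ii)), the $\G$-cover is extracted from any $\G$-precover by deleting the maximal summand on which the precover vanishes; any such summand of $\Omega^i_{\HH}({\f})$ must have zero domain and codomain inside $\ker \pi = Q$, hence is of the form $(0 \to P_0)$ with $P_0$ a summand of $Q$ and therefore projective in $\M$. Since $\Omega^i_{\HH}({\f})$ has no projective summand, this vanishing summand is zero, so $\phi$ itself is the $\G$-cover, and since $\Omega^i_{\HH}({\f})$ is reduced we obtain $\Omega^i_{\HH}({\f}) \cong \red(\G\text{-cov}~\Omega^i_R{\f})$.

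For the second equation, the object $\Coker(\Omega^i_{\HH}({\f})) = (\Omega^i B \oplus Q \twoheadrightarrow \Omega^i C)$ lies in $\E$, and I would argue analogously using the natural morphism $\Omega^i_R {\g} \to \Coker(\Omega^i_{\HH}({\f}))$ induced by the inclusion $\Omega^i B \hookrightarrow \Omega^i B \oplus Q$ paired with $1_{\Omega^i C}$, checking it is an $\E$-preenvelope via projectivity (equivalently, injectivity in the Frobenius category $\E$) of $Q$; alternatively, apply the duality $(-)' \colon \G \to \E$ of \ref{s4} to dualize the first equation applied to the dual short exact sequence $0 \to C' \to B' \to A' \to 0$. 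The main obstacle is this right-minimality step: certifying that the natural $\G$-precover (resp.\ $\E$-preenvelope) differs from a cover (resp.\ envelope) only by projective (resp.\ injective) summands, which is where the minimality of the projective resolution in $\M$ does the essential work by ruling out such summands in $\Omega^i_{\HH}({\f})$.
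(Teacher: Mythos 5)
Your proposal follows essentially the same route as the paper: part $(i)$ by the universal properties of kernel and cokernel (the paper omits this as ``easily verified'', and your ring isomorphism $\End_{\HH}({\f})\cong\End_{\HH}({\g})$ is correct), and part $(ii)$ by taking the explicit presentation $\Omega^i_{\HH}({\f})=(\Omega^i A\rt\Omega^i B\oplus Q)$ from Lemma \ref{lem11}, showing that the evident projection onto $(\Omega^i A\st{\Omega^i_R{\f}}\rt\Omega^i B)$ is a $\G$-precover because the relevant extension against the projective object $(0\rt Q)$ vanishes, and then certifying right minimality. The only real divergence is in that last step: the paper deduces minimality from the fact that $\End_{\HH}(\Omega^i_{\HH}({\f}))$ is local (Lemma \ref{lem7} plus Krull--Schmidt), so that a nonzero precover with indecomposable source is automatically a cover, whereas you split off the summand on which the precover vanishes and identify it as a projective object $(0\rt D)$ with $D$ a direct summand of $Q$. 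Both arguments work and rest on the same inputs.

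There is one point you must repair: you justify ``$\Omega^i_{\HH}({\f})$ has no projective summand'' by ``minimality of the projective resolution'', and that is not enough. Minimal syzygies can perfectly well have projective summands in general (over a discrete valuation ring, $\Omega^1 k\cong R$). What actually rules them out here is Lemma \ref{lem7}: because ${\f}$ is an indecomposable non-projective Gorenstein projective object of $\M$ and $R$ is complete, $\Omega^i_{\M}({\f})$ is again indecomposable and non-projective; the proof of that lemma uses $\Ext^1$-vanishing against projectives, i.e.\ the Gorenstein hypothesis, not just minimality of the resolution. Two smaller remarks: since $\Omega^i_R{\f}$ is only well defined up to projectively trivial maps, you need Remark \ref{rem2}.(1) (via Proposition \ref{pro9}) to make the reduced $\G$-cover of $\Omega^i_R{\f}$ unambiguous; and your ``alternative'' proof of the second isomorphism by dualizing the first one applied to $0\rt C'\rt B'\rt A'\rt 0$ does not go through as stated, because $(-)'$ carries syzygies to cosyzygies, so $(\Omega^i_{\HH}(C'\rt B'))'$ is not $\Coker(\Omega^i_{\HH}({\f}))$; your primary, direct $\E$-preenvelope argument is the correct one and is what the paper does.
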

\begin{proof}Since $(i)$ can be verified easily, we only prove part $(ii)$.
To that end, one should first note that, in view of Lemma \ref{lem11}, there
is a projective $R$-module $Q$ such that $\Omega^i_{\HH}({\f})=({\Omega^i A\rt\Omega^i B\oplus Q})$ and ${\Coker}(\Omega^i_{\HH}({\f}))=({\Omega^i B\oplus Q\rt\Omega^i C})$ belonging to $\G$ and $\E$, respectively.
According to the short exact sequence{\fo \[ \xymatrix@R-2pc {  &  ~0 \ar[dd]~   & \Omega^i A\ar[dd]~  & \Omega^i A\ar[dd] \\ 0 \ar[r] &  _{ \ \ \ \ } \ar[r] &_{\ \ \ \ \ } \ar[r]_{[1~~0]^{t}}^{1}  _{\ \ \ \ \ }&  _{\ \ \ \ \ }\ar[r] & 0 \\ & Q & \Omega^i B\oplus Q & \Omega^i B }\]} and using the fact that $ (0\rt Q)$ is a projective object in $\HH$, one deduces that the morphism
{\fo \[ \xymatrix@R-1.7pc {  & \Omega^iA\ar[dd]~   & \Omega^iA\ar[dd]^{\Omega^i_R{\f}}~ \\  &  _{ \ \ \ \ } \ar[r]  \ar[r]^{1}_{[1~~0]^t}  & & ~~~~~~~~~~~~~~~~~~~~~({\dag})  \\ & \Omega^iB\oplus Q & \Omega^iB  }\]} is indeed a $\G$-precover of $(\Omega^i A\st{\Omega^i_R\f}\rt \Omega^i B)$.
We should stress that $\Omega^i_{R}{\f}$ is the map which has been appeared in $\Omega^i_{\HH}({\f})$.
Moreover, by invoking Lemma \ref{lem7} in conjunction with \ref{s11}.(ii),
one infers that $\End_{\HH}( {\Omega^iA\rt \Omega^iB\oplus Q})$ is a local ring implying that the morphism $(\dag)$ is a $\G$-cover of $\Omega^i_R{\f}$. Now, since $\Omega^i_R{\f}$ is unique, up to projectively
trivial maps, Remark \ref{rem2}.(1) completes the proof of the first assertion.
Analogously or rather dually, one may take the following short exact sequence in $\HH$; {\fo \[ \xymatrix@R-2pc {  &  ~\Omega^iB \ar[dd]~   & \Omega^iB\oplus Q\ar[dd]~  & Q\ar[dd] \\ 0 \ar[r] &  _{ \ \ \ \ } \ar[r]^{[1~~0]}_{1}  &_{\ \ \ \ \ } \ar[r] _{\ \ \ \ \ }&  _{\ \ \ \ \ }\ar[r] & 0 \\ & \Omega^i C & \Omega^i C & 0 }\]} and use the fact that $(Q\rt 0)$ is an injective object of $\HH$, in order to conclude that
{\fo \[ \xymatrix@R-1.7pc {  & \Omega^iB\ar[dd]^{\Omega^i_R{\g}}~   & \Omega^iB\oplus Q\ar[dd]~  \\  &  _{ \ \ \ \  \ \ \ \ \ \ \ \ \ \  \ } \ar[r]  \ar[r]_{1}^{[1~~0]}  & &  ~~~~~~~~~~~~~~~~~~~~~~~~~~~~~~~~~~~~~~_{\ \ \ \ \ \ \ }({\dag\dag}) \\ & \Omega^iB & \Omega^iB  }\]}

is an $\E$-preenvelope of $(\Omega^i B\st{\Omega^i_R\g}\lrt\Omega^i C)$. Furthermore, By using Lemma \ref{lem7}
again in conjunction with the first assertion, we conclude that $\End_{\HH}({\Coker}(\Omega^i_{\HH}({\f})))$
is a local ring. This, in turn, implies that the morphism indicated by $({\dag\dag})$ is an $\E$-envelope of
$\Omega^i_R{\g}$. So Remark \ref{rem2}.(2) completes the proof.
\end{proof}

As an application of Proposition \ref{lem8}, we include two corollaries,
which is needed in the last section of this paper.

\begin{cor} \label{lem6}Let $(A\st{\f}\rt B)$ be an indecomposable non-projective object  in $\G$. Then,
for each $i\geq 0$, $\Omega^i_{\HH}\Tr_{\HH}({\f})$ is isomorphic to a reduced $\G$-cover of $\Omega^i_R\Tr_{R}(\Coker({\f}))$.
\end{cor}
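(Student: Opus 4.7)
The plan is to combine Proposition \ref{lem16}, which realises $\Tr_{\M}({\f})$ inside a short exact sequence of $R$-modules, with Proposition \ref{lem8}(ii), which computes the $\HH$-syzygies of a distinguished morphism in a short exact sequence of Gorenstein projectives via $\G$-covers in $\M$.

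First I would unpack the hypothesis. Since ${\f}\in\G$, the description in \ref{s5} gives $A, B, C\in\GG(R)$, where $C=\coker({\f})$. In particular $\Ext^{1}_{R}(C,R)=0$, so Proposition \ref{lem16}(2) applies and produces a short exact sequence of $R$-modules
$$0\lrt \Tr C\lrt \Tr B\oplus Q\lrt \Tr A\lrt 0,$$
in which $\Tr_{\M}({\f})$ is precisely the monomorphism $[\Tr{\g}~~{\h}_{1}]:\Tr C\rt \Tr B\oplus Q$ on the left. Because the Auslander transpose preserves Gorenstein projectivity, the assumptions on $A, B, C$ force $\Tr A, \Tr B, \Tr C\in\GG(R)$, so the whole sequence lies in $\GG(R)$ and $\Tr_{\M}({\f})\in\G\subseteq\HH$. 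Moreover $\Tr_{\HH}({\f})=\Tr_{\M}({\f})$, because $\proj\HH=\proj\M$ by \ref{s5} forces the minimal projective presentations of ${\f}$ in $\M$ and in $\HH$ to coincide.

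Next I would check the remaining hypotheses of Proposition \ref{lem8}(ii) for $\Tr_{\M}({\f})$ viewed as the first map of the displayed sequence. Non-projectivity is immediate from the minimality of the defining presentation, while indecomposability follows from the Krull--Schmidt property of $\HH$ recorded in \ref{s11}(ii) together with the standard fact that the Auslander transpose induces a duality on the projectively stable category of a noetherian semiperfect algebra, and therefore carries indecomposables to indecomposables. With these in hand, Proposition \ref{lem8}(ii) yields
$$\Omega^{i}_{\HH}\Tr_{\M}({\f})\cong\red(\G{\text- cov}~\Omega^{i}_{R}\Tr_{\M}({\f})),$$
where $\Omega^{i}_{R}\Tr_{\M}({\f})$ denotes the $i$-th syzygy, taken componentwise in ${\rm mod}R$, of the morphism $[\Tr{\g}~~{\h}_{1}]$.

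Finally I would identify $\Omega^{i}_{R}\Tr_{\M}({\f})$ with $\Omega^{i}_{R}\Tr_{R}(\Coker({\f}))$, reading the latter as the $R$-morphism $\Omega^{i}\Tr C\rt\Omega^{i}\Tr B$ obtained by applying $\Omega^{i}\Tr_{R}$ to the canonical surjection ${\g}:B\rt C$. For $i\geq 1$ the projective summand $Q$ vanishes after taking $\Omega^{i}_{R}$, so the two morphisms literally agree; for $i=0$ they differ only by the projective object $(0\rt Q)$ and by the auxiliary component ${\h}_{1}$, which factors through the projective $R$-module $Q$, and Proposition \ref{pro9} together with Remark \ref{rem2}(1) force their reduced $\G$-covers to coincide. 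The main obstacle I expect is precisely this last reconciliation at $i=0$: one must argue carefully that the auxiliary projective data $(Q, {\h}_{1})$ introduced by Proposition \ref{lem16} in order to secure monomorphicity of $\Tr_{\M}({\f})$ is absorbed by the $\G{\text- cov}$ construction and subsequently erased by $\red$, whereas the remaining ingredients (transpose preserving $\GG(R)$, indecomposability of $\Tr_{\M}({\f})$, and the coincidence of minimal presentations in $\M$ and $\HH$) are either standard or were established earlier.
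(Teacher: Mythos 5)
Your proposal is correct and rests on the same two pillars as the paper's proof -- Proposition \ref{lem16} to realise $\Tr_{\M}({\f})$ as the monomorphism $[\Tr{\g}~~{\h}_1]:\Tr C\rt\Tr B\oplus Q$ sitting in the exact sequence $0\rt\Tr C\rt\Tr B\oplus Q\rt\Tr A\rt 0$, and Proposition \ref{lem8}(ii) together with the observation that $\Omega^i$ annihilates the projective summand $Q$ for $i\geq 1$ -- but you handle the base case $i=0$ and the indecomposability of the transpose differently. The paper gets indecomposability and non-projectivity of $\Tr_{\HH}({\f})$ from the identification $\Tr_{\HH}({\f})\cong j_e(\Omega^2_{\HH}({\f}))^*$ and Lemma \ref{lem7}, and then settles $i=0$ by writing $\Tr_{\HH}({\f})\cong(\Coker(\Omega^2_{\HH}({\f})))'$, applying Proposition \ref{lem8}(ii) to $\Omega^2_{\HH}({\f})$ to recognise $\Coker(\Omega^2_{\HH}({\f}))$ as a reduced $\E$-envelope of $\Omega^2_R{\g}$, and dualising via \ref{s4}; you instead argue that $\Tr_{\M}({\f})$ itself lies in $\G$ (using $\Ext^1_R(C,R)=0$ and the fact that $\Tr$ preserves total reflexivity) and is indecomposable non-projective by the stable duality property of the transpose over the semiperfect algebra $R\Q$, and then compare the reduced $\G$-covers of $[\Tr{\g}~~{\h}_1]$ and of $\Tr{\g}$ directly. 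Your route is arguably more economical, since it avoids the detour through $\Omega^2$ and the $\E$-envelope/$\G$-cover duality. The only point to tighten is your appeal to Proposition \ref{pro9} at $i=0$: as stated, that proposition compares two morphisms with the \emph{same} domain and codomain, whereas you need to compare ${\f}_0:\Tr C\rt\Tr B$ with $[{\f}_0~~{\h}_1]:\Tr C\rt\Tr B\oplus Q$; the required isomorphism $\red(\G{\text{-}}{\rm cov}~{\f}_0)\cong\red(\G{\text{-}}{\rm cov}~[{\f}_0~~{\h}_1])$ is not the statement of \ref{pro9} but is exactly the pull-back step established inside its proof (via the short exact sequence $0\rt(0\rt Q)\rt[{\f}_0~~{\h}_1]\rt{\f}_0\rt 0$), so you should cite that intermediate isomorphism explicitly rather than the proposition itself.
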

\begin{proof}Set, for simplicity, $C=\coker({\f})$ and ${\g}={\Coker(\f}):B\rt C$.  We would like to show the claim first in the case $i=0$.
By Proposition \ref{lem16}, $\Tr_{\M}{\f}=(\Tr C\rt\Tr B\oplus Q)$, for some projective $R$-module $Q$
and also the sequence $0\lrt\Tr C\lrt\Tr B\oplus Q\lrt\Tr A\lrt 0$ is exact.
Moreover, one should note that $(A\st{\f}\rt B)$ belonging to $\G$ enforces that
$\Tr_{\HH}({\f})\cong j_e(\Omega^2_{\HH}({\f}))^*$  which is
indecomposable and non-projective, thanks to Lemma \ref{lem7}.
In view of Proposition \ref{lem8}.(ii), ${\Coker}(\Omega^2_{\HH}({\f}))$ is isomorphic to a
reduced $\E$-envelope of $\Omega^2_R {\g}$. In addition, it follows from the exact sequence $(\dag)$
 appeared in the proof of Proposition \ref{lem16}, that $\Tr_{\HH}({\f})\cong j_e(\Omega^2_{\HH}{\f})^*\cong({\Coker}({\Omega}^2_{\HH}({\f}))'$. Consequently, $\Tr_{\HH}({\f})$ is isomorphic to a reduced $\G$-cover of $(\Omega^2_R~{\g})'$, thanks to Proposition \ref{lem8}.(ii).
 It should be observed that, since $B$ and $C$ belong to ${\GG}(R)$, one may obtain that $(\Omega^2_R {\g})'=(\Tr C\rt\Tr B)$. So we have done.
 Next we are going to prove the result for $i>0$. Because of Proposition \ref{lem8}.(ii), there exists a
 projective $R$-module $P$ such that $\Omega^i_{\HH}(\Tr_{\HH}({\f}))=(\Omega^i\Tr C\rt\Omega^i\Tr B\oplus P)$
 is isomorphic to a reduced $\G$-cover of $\Omega^i_R(\Tr_{\HH} ({\f}))=(\Omega^i\Tr C\rt\Omega^i(\Tr B\oplus Q))$. Now, since $\Omega^i(\Tr B\oplus Q)=\Omega^i\Tr B,$ one deduces the claim.
\end{proof}

It is worth pointing out that for a given object $(A\st{\f}\rt B)$ in $\HH$, there is an induced map
$\Tr_{R}({\f})=(\Tr B\rt\Tr A)$ which is unique up to projectively trivial maps.

\begin{cor}\label{lem9}With the notation and assumptions of the above corollary,
the isomorphism $\Omega^i_{\HH}\Tr_{\HH}({\f})\cong {\Ker}(\red(\E{\text -env} ~\Omega^i\Tr_{R} ({\f})))$ holds true.
\end{cor}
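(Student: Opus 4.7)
The statement is complementary to Corollary \ref{lem6}: instead of presenting $\Omega^i_{\HH}\Tr_{\HH}({\f})$ as a $\G$-cover, we present it as the kernel of an $\E$-envelope. The plan is to apply the second formula of Proposition \ref{lem8}.(ii) to the object $\Tr_{\HH}({\f})$ itself, and then recover $\Omega^i_{\HH}\Tr_{\HH}({\f})$ from its cokernel via the kernel-cokernel equivalence between $\G$ and $\E$ inside $\HH$.

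\textbf{Step 1 (setup).} As observed in the proof of Corollary \ref{lem6}, the object $\Tr_{\HH}({\f})$ is indecomposable and non-projective in $\HH$, and in fact lies in $\G$. By Proposition \ref{lem16} it fits into the short exact sequence
\[0\lrt \Tr C\lrt \Tr B\oplus Q\st{[\Tr{\f}~~{\h}_2]^t}\lrt \Tr A\lrt 0,\]
which lies in ${\GG}(R)$ since $A,B,C\in{\GG}(R)$ and the transpose preserves Gorenstein projectivity. Hence Proposition \ref{lem8}.(ii) is applicable with this sequence, and its second assertion gives
\[{\Coker}(\Omega^i_{\HH}\Tr_{\HH}({\f}))\cong\red({\E\text{-env}}~\Omega^i_R[\Tr{\f}~~{\h}_2]^t).\]

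\textbf{Step 2 (eliminating the auxiliary projective).} We next show that replacing $[\Tr{\f}~~{\h}_2]^t$ by $\Tr_R({\f})$ does not affect the reduced $\E$-envelope. For $i\ge 1$ this is clear, because $\Omega^i_R$ annihilates the projective summand $Q$ in the domain, so $\Omega^i_R[\Tr{\f}~~{\h}_2]^t$ and $\Omega^i_R\Tr_R({\f})$ coincide as maps $\Omega^i\Tr B\rt \Omega^i\Tr A$ up to projectively trivial maps. For $i=0$ one argues directly from \ref{s111}: since the surjection $Q\twoheadrightarrow\coker\Tr_R({\f})$ induced by ${\h}_2$ factors through the projective cover $P\rt \coker\Tr_R({\f})$, one may split $Q=P\oplus Q'$ and, using that $Q'$ is projective and maps into $\im\Tr_R({\f})$, absorb the $Q'$-component into the $\Tr B$-component. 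This yields in $\E$ a decomposition $[\Tr{\f}~~{\h}_2]^t\cong(\E\text{-env}~\Tr_R({\f}))\oplus (Q'\rt 0)$, and the summand $(Q'\rt 0)$ is injective in the Frobenius category $\E$, hence removed by $\red$. Combining both cases we obtain
\[{\Coker}(\Omega^i_{\HH}\Tr_{\HH}({\f}))\cong\red({\E\text{-env}}~\Omega^i_R\Tr_R({\f})).\]

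\textbf{Step 3 (passing to the kernel).} Since $\Omega^i_{\HH}\Tr_{\HH}({\f})$ lies in $\G\subseteq\Sc$, it is a monomorphism, so ${\Ker}\circ{\Coker}$ is the identity on it. Applying ${\Ker}$ to both sides of the isomorphism above, and using that an isomorphism in $\E$ induces an isomorphism of kernels in $\G$, yields the required identity
\[\Omega^i_{\HH}\Tr_{\HH}({\f})\cong{\Ker}(\red({\E\text{-env}}~\Omega^i_R\Tr_R({\f}))).\]

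\textbf{Main obstacle.} The substantive point is Step 2: identifying $\red({\E\text{-env}}~\Omega^i_R[\Tr{\f}~~{\h}_2]^t)$ with $\red({\E\text{-env}}~\Omega^i_R\Tr_R({\f}))$ despite the extra projective $Q$ introduced by the description of $\Tr_{\M}({\f})$ in Proposition \ref{lem16}. The case $i=0$ is the delicate one, as the syzygy functor no longer automatically disposes of $Q$; here the explicit decomposition in $\E$ (which is in spirit a dual of Remark \ref{rem2}.(2)) is needed.
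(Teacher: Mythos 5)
Your proof is correct and follows essentially the same route as the paper: both apply Proposition \ref{lem8} to the short exact sequence $0\rt\Tr C\rt\Tr B\oplus Q\rt\Tr A\rt 0$ supplied by Proposition \ref{lem16}(2), reduce to identifying $\Coker(\Omega^i_{\HH}\Tr_{\HH}({\f}))$ with $\red(\E{\text{-env}}~\Omega^i_R\Tr_R({\f}))$, and finish by applying ${\Ker}$. The only (cosmetic) divergence is in the case $i=0$, where the paper shows $\Tr_R({\f})\rt\Coker(\Tr_{\HH}({\f}))$ is an $\E$-preenvelope and upgrades it to an envelope via locality of the endomorphism ring (Proposition \ref{lem8}(i)), whereas you split off the injective summand $(Q'\rt 0)$ explicitly using \ref{s111}; both accomplish the same thing.
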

\begin{proof}By virtue of the second assertion of Proposition \ref{lem16},
there is a short exact sequence $0\rt\Tr C\rt\Tr B\oplus Q\rt\Tr A\rt 0,$
for some projective $R$-module $Q$. In particular, $\Tr_{\HH}({\f})=(\Tr C\rt\Tr B\oplus Q)$
with local endomorphism ring. By applying the argument which have been used in the proof of Proposition \ref{lem8}.(ii), one may infer that $\Tr_R({\f})\rt{\Coker}(\Tr_{\HH}({\f}))$ is an $\E$-preenvelope of $\Tr_R({\f})=(\Tr B\rt\Tr A)$.
According to Proposition \ref{lem8}.(i), we deduce that $\End_{\HH}({\Coker}(\Tr_{\HH}({\f})))$ is a local ring ensuring ${\Coker}(\Tr_{\HH}({\f}))\cong\red(\E{\text-env}~\Tr_R({\f}))$. Next suppose that $i>0$. By Proposition \ref{lem8}.(ii),
${\Coker}(\Omega^i_{\HH} \Tr_{\HH}({\f}))\cong\red(\E{\text -env}~\Omega^i_R \Tr_R({\f}))$ and so $\Omega^i_{\HH}\Tr_{\HH}({\f})\cong{\Ker}(\red(\E{\text-env}~\Omega^i_{R}\Tr_R({\f})))$. Thus, the proof is complete.
\end{proof}

\section{Characterization of horizontally linked morphisms}
This short section deals with linkage theory in submodule category $\Sc$.
Namely, inspired by the definition of horizontally linked modules,
given by Martinkovsky and Strooker \cite{MS}, we define horizontally linked morphisms.
Moreover, results of previous section is applied
to give an explicit description of horizontally linked morphisms within ${\rm mod}R$.
We begin with the following definition.
\begin{dfn}(\cite[Definition 3]{MS}) Let $M$ and $N$ be finitely generated $R$-modules.
$M$ and $N$ are said to be {\sl horizontally linked} if $M\cong\lambda N$
and $N\cong\lambda M$, where $\lambda$ is the operator $\Omega^1\Tr_{R}$.
Thus $M$ is said to be horizontally linked (to $\lambda M$) if and only if $M\cong\lambda^2M$.
\end{dfn}

\begin{dfn}Let $(A\st{\f}\rt B)\in\Sc$ be arbitrary. We say that ${\f}$
is horizontally linked (to $\lambda {\f}$) provided that ${\f}\cong\lambda^2{\f}$
in $\M$, where $\lambda=\Omega^1_{\M}\Tr_{\M}$.
\end{dfn}

\begin{remark}Let ${\f}:A\rt B$ be an arbitrary object in $\Sc$. Letting $C=\coker(f)$
and ${\g}:B\rt C$, we have that $\Tr_{\M}{\f}=(\Tr C\st{[\Tr {\g}~~{\h}_1]}\rt\Tr B\oplus Q)$,
for some projective $R$-module $Q$, thanks to Proposition \ref{lem16}. Consequently,
$\lambda(A\st{\f}\rt B)=(\lambda C\rt\lambda B\oplus P)$,
in which $P$ is a projective $R$-module.
\end{remark}
\begin{example}Let $M\in{\rm mod}R$ be horizontally linked.
Then $0\rt M$ and $M\st{id}\rt M$, as objects of $\Sc$, are horizontally linked.
Actually, by virtue of the above remark, one may obtain that
$\lambda(0\rt M)=(\lambda M\rt\lambda M)$ and $\lambda(M\st{id}\rt M)=( 0\rt\lambda M)$.
So applying the operator $\lambda$ to theses objects again,
in conjunction with the hypothesis that $M$ is horizontally linked, yields the desired result.
\end{example}
The next result, which gives a criterion for horizontally linkage in $\Sc$, can be proved
by the same argument given in the proof of Theorem 2 of \cite{MS}. So we do not give its proof.

\begin{prop}\label{pro1}$(A\st{\f}\rt B)\in\Sc$ is horizontally linked if and only if
${\f}$ has no projective summands and $\Ext_{\M^{\rm op}}^1(\Tr_{\M}({\f}), {\bf r})=0$.
\end{prop}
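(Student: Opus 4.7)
I would mimic the proof of \cite[Theorem 2]{MS} almost verbatim; only two ingredients are needed for that classical argument to carry over, and both are in place here. The first is that the ambient category must behave like a noetherian $R$-algebra in the sense of Auslander, which holds for $\M$ by \ref{s3}; the second is that the object under consideration must admit a minimal projective presentation, which holds for any $\f\in\Sc$ by Lemma \ref{lem11}.

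The first step is to set up the Auslander-Bridger fundamental exact sequence inside $\M$. Starting from a minimal projective presentation $P_1\to P_0\to \f\to 0$ and dualizing twice by $(-)^{*}=\Hom_{\M}(-,\bf r)$, one obtains the four-term sequence
\begin{equation*}
0 \lrt \Ext^{1}_{\M^{\op}}(\Tr_{\M}\f, \bf r) \lrt \f \st{\sigma_{\f}}\lrt \f^{**} \lrt \Ext^{2}_{\M^{\op}}(\Tr_{\M}\f, \bf r) \lrt 0,
\end{equation*}
where $\sigma_{\f}$ is the canonical biduality morphism. This is a purely formal consequence of having a minimal projective presentation, exactly as in the module-theoretic case. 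In particular, $\sigma_{\f}$ is injective if and only if $\Ext^{1}_{\M^{\op}}(\Tr_{\M}\f,\bf r)=0$.

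The second step is to identify $\lambda^{2}\f$ with the image of $\sigma_{\f}$ in the projectively stable category $\underline{\M}$, using Proposition \ref{lem16} to keep track of the projective summands that arise when passing from an arbitrary projective resolution to a minimal one. Granted this identification, $\f\cong\lambda^{2}\f$ in $\M$ becomes equivalent to the conjunction of (a) $\sigma_{\f}$ being a monomorphism, hence $\Ext^{1}_{\M^{\op}}(\Tr_{\M}\f,\bf r)=0$ by the fundamental sequence, and (b) $\f$ carrying no projective direct summand, since $\lambda^{2}\f$, being built from the first syzygy of a minimal presentation, is automatically stable. This yields the desired equivalence.

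\textbf{Main obstacle.} The only nontrivial bookkeeping is step two: carefully tracking the auxiliary projective summands that appear in the minimal projective presentation of $\Tr_{\M}\f$ (and then of $\lambda\f$) to confirm that, modulo projectives, $\lambda^{2}\f$ really is $\text{im}(\sigma_{\f})$. This is precisely what Lemma \ref{lem11} and Proposition \ref{lem16} were designed to control, which is presumably why the authors elect to omit the proof and refer the reader to \cite{MS}.
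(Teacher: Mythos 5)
Your plan coincides with what the paper actually does: the authors give no proof at all, remarking only that the argument of \cite[Theorem 2]{MS} carries over verbatim, and the two ingredients you isolate (Lemma \ref{lem11} supplying minimal projective presentations in $\Sc$, and the identification of $\Hom_{\M}(-,{\bf r})$ with $\Hom_{R\Q}(-,R\Q)$ so that the Auslander--Bridger four-term sequence is available) are exactly what makes that transplant legitimate. One small caution on your step (b): a minimal first syzygy need not be stable in general (e.g.\ $\Omega_{k[[x]]}k\cong k[[x]]$), so the stability of $\lambda^{2}{\f}$ should be extracted from the Martsinkovsky--Strooker argument itself rather than read off from minimality of the presentation.
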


Now we are ready to state and prove the main result of this section.
Recall that an $R$-module $M$ is said to be stable, provided it has no any non-trivial projective summand.

\begin{theorem}\label{th5}Let $(A\st{\f}\rt B)\in\Sc$ be arbitrary such that $A$ and $B$ are stable.
Then $\f$ is horizontally linked if and only if so are $A$ and $B$.
\end{theorem}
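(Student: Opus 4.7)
The plan is to reduce to the module-theoretic criterion of \cite{MS} via Proposition \ref{pro1} and an explicit computation of the Auslander biduality map in $\M$. By Proposition \ref{pro1}, $\f\in\Sc$ is horizontally linked iff $\f$ has no projective summand in $\M$ and $\Ext^1_{\M^{\op}}(\Tr_{\M}\f,\mathbf{r})=0$. By Theorem 2 of \cite{MS}, $A$ (resp.\ $B$) is horizontally linked iff it is a stable $R$-module with $\Ext^1_R(\Tr A,R)=0$ (resp.\ $\Ext^1_R(\Tr B,R)=0$). Since the indecomposable projective objects of $\M$ are precisely $P\xrightarrow{1}P$ and $0\to P$ for $P$ an indecomposable projective $R$-module, and since $\f$ is a monomorphism, a basis-change argument shows that $\f$ carries no projective summand in $\M$ exactly when both $A$ and $B$ are stable, matching the hypothesis.

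The core step is to identify $\Ext^1_{\M^{\op}}(\Tr_{\M}\f,\mathbf{r})$ componentwise. I would invoke the Auslander four-term sequence
$$0 \to \Ext^1_{\M^{\op}}(\Tr_{\M}\f,\mathbf{r}) \to \f \xrightarrow{\sigma_\f} \f^{**} \to \Ext^2_{\M^{\op}}(\Tr_{\M}\f,\mathbf{r})\to 0,$$
which realises the relevant $\Ext^1$ as $\ke\sigma_\f$, together with its analogue in $\text{mod}\,R$ giving $\Ext^1_R(\Tr M,R)\cong \ke\sigma_M$ for a module $M$. Using the decomposition $\mathbf{r}=(R\xrightarrow{1}R)\oplus(0\to R)$ and the Hom-computations of Section 2, one obtains $\f^{*}=\Hom_{\M}(\f,\mathbf{r})=B'\oplus C'$ with $C=\coker\f$, and the internal arrow of $\f^{*}\in\M^{\op}$ is the canonical inclusion $C'\hookrightarrow B'$ from the six-term sequence of Proposition \ref{lem16}. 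Tracking vertex components through a second dualisation, $\f^{**}$ in $\M$ has vertex-$2$ component $B''$, vertex-$1$ component $\Hom_R(B'/C',R)$, and $\sigma_\f$ is $\sigma_B$ at vertex $2$ and the composite $A\xrightarrow{\f}B\xrightarrow{\sigma_B}B''$ (which factors through $\Hom_R(B'/C',R)$) at vertex $1$.

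Since kernels in the abelian category $\M$ are formed componentwise, $\ke\sigma_\f=(\f^{-1}(\ke\sigma_B)\to\ke\sigma_B)$. Hence $\ke\sigma_\f=0$ iff $\ke\sigma_B=0$, the vertex-$1$ vanishing being automatic from the injectivity of $\f$. Moreover, if $\sigma_B$ is injective then $B$ embeds in some $R^n$, whence $A$ does too via $\f$, forcing $\sigma_A$ injective as well. Consequently $\Ext^1_{\M^{\op}}(\Tr_{\M}\f,\mathbf{r})=0$ is equivalent, under the stability assumption, to the simultaneous vanishing of $\Ext^1_R(\Tr A,R)$ and $\Ext^1_R(\Tr B,R)$, and the theorem follows. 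The principal technical hurdle is the explicit identification of $\f^{**}$ and $\sigma_\f$ in $\M$: one must carefully exploit the two indecomposable summands of $\mathbf{r}$, the internal arrow of $\f^{*}$ arising from the six-term sequence of Proposition \ref{lem16}, and verify that these dualise back to yield the inclusion $\Hom_R(B'/C',R)\hookrightarrow B''$ as the internal arrow of $\f^{**}$.
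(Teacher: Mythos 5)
Your argument is sound, but it takes a genuinely different route from the paper's. The paper never leaves the level of projective presentations: it builds the $3\times 3$ diagram whose rows present $\Tr C$, $\Tr B\oplus Q$ and $\Tr A$ (with $C=\coker{\f}$), splits $\mathbf{r}=(R\to R)\oplus(0\to R)$, and lifts a morphism out of the syzygy $(T\to T')$ by hand, so that the two indecomposable summands of $\mathbf{r}$ separately detect $\Ext^1_R(\Tr B,R)$ and $\Ext^1_R(\Tr A,R)$. You instead route everything through the Auslander--Bridger biduality sequence and compute $\f^{**}$ and $\sigma_{\f}$ vertexwise; your identifications are in fact correct ($e_1\f^{**}\cong(B'/C')'$, $e_2\f^{**}\cong B''$, with $\sigma_{\f}$ given by $\sigma_B$ at vertex $2$ and the corestriction of $\sigma_B\circ\f$ at vertex $1$), and they give $\ke(\sigma_{\f})=(\f^{-1}(\ke\,\sigma_B)\to\ke\,\sigma_B)$. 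This buys something the paper's computation keeps hidden: the whole criterion collapses to torsionlessness of $B$ alone, the condition on $A$ being automatic because a submodule of a torsionless module is torsionless; the paper's approach, in exchange, is elementary, symmetric in $A$ and $B$, and never requires unwinding the double dual over $R\Q$. Two small repairs to your write-up: the assertion that $\f$ has no projective summand \emph{exactly when} $A$ and $B$ are stable is false as a biconditional (e.g.\ $\m\hookrightarrow R$ has no projective summand in $\M$ although $R$ is free); only the implication from stability of $A$ and $B$ holds, and that is all you use. Likewise, the vertex-$1$ vanishing of $\ke(\sigma_{\f})$ is not automatic from injectivity of $\f$ alone --- $\f^{-1}(\ke\,\sigma_B)$ can be nonzero --- it is automatic from injectivity \emph{once} $\ke\,\sigma_B=0$, which is the direction you need.
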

\begin{proof}Set, for simplicity, $C=\coker{\f}$ and ${\g}={\Coker}{\f}:B\rt C$.
Let us first to show the `sufficiency'. Since $A$ and $B$ are stable and
horizontally linked, in view of \cite[Theorem 2]{MS}, $\Ext_{R}^1(\Tr A, R)=0=\Ext_{R}^1(\Tr B, R)$.
Consider the following commutative diagram with exact rows and columns in ${\rm mod}R$:
 \[\xymatrix@C-0.5pc@R-.8pc{&0\ar[d]&0 \ar[d] &  &&\\0\ar[r]& T
 \ar[r]^{\delta} \ar[d]^{t} & P_0 \ar[d]^{\iota_1}\ar[r] & \Tr C  \ar[d]^{[\Tr{\g}~~{\h}_1]}\ar[r]&
0& \\ 0 \ar[r]  & T' \ar[r]^{\delta'} \ar[d]^{t'} & P_0\oplus Q_0 \ar[d]^{\pi_2} \ar[r] & \Tr B\oplus Q \ar[d]^{[\Tr{\f}~~{\h}_2]} \ar[r]&0&\\ 0 \ar[r]  & T''\ar[r]^{\delta''} \ar[d] & Q_0\ar[r] \ar[d] &\Tr A \ar[r] \ar[d]&0&\\ &0& 0& 0& &\\}\]
Suppose that  a morphism $(\varphi, \varphi'):(T\rt T')\lrt(R\rt R\oplus R)$ in $\M$ is given.
So, one has the following commutative diagram in ${\rm mod}R$:
{\fo$$\begin{CD}T @>{\varphi}>>R \\@VtVV @VVV\\
T' @>{\varphi'}>>R\oplus R.\end{CD}$$}
We would like to show that this morphism factors through $(P_0\rt P_o\oplus Q_0)$.
Since the split monomorphism ${\bf r}= (R\rt R)\oplus (0\rt R)$, letting $\varphi'=(\varphi'_1, \varphi'_2)$,
the above commutative diagram, gives us the sequel two commutative diagrams;
{\fo$$\begin{CD}T @>{\varphi}>>R \\@VtVV @V{1}VV\\
T' @>{\varphi'_1}>>R\end{CD}$$} and
{\fo$$\begin{CD}T @>>>0 \\@VtVV @VVV\\
T' @>{\varphi'_2}>>R.\end{CD}$$}

Since $\Ext_R^1(\Tr B, R)=0$, by considering the first square, we get an $R$-homomorphism
${h_2}:P_0\oplus Q_0\rt R$ such that $h_2\delta' =\varphi'_1$.
Now, assuming $h_1=h_2\iota_1:P_0\rt R$, one deduces the following
equalities; $$h_1\delta=h_2\iota_1\delta=h_2\delta't=\varphi'_1t=\varphi.$$
This indeed means that  $(\varphi,\varphi'_1)$ factors through $(P_0 \rt P_0 \oplus Q_0)$.
Next, it follows from the second square that
$\varphi'_2t=0$. So, there is an $R$-homomorphism $\varphi'':T'' \rt R$ such that $\varphi''t'=\varphi'_2$.
On the other hand, the assumption imposed on $A$, induces an $R$-homomorphism $h'':Q_0 \rt R$
such that $\varphi''=h''\delta''.$ Letting $h'=h''\pi_2$, one may obtain the equalities,
$ h'\delta'=h''\pi_2\delta'=h''\delta''t'=\varphi''t'=\varphi'_2,$
meaning that $(0,\varphi'_2)$ factors through $(P_0 \rt P_0 \oplus Q_0).$
Therefore $\Ext_{\M^{\rm op}}^1(\Tr_{\M}({\f}), {\bf r})=0$.
So the claim follows from the above proposition.
Next we would like to show the `necessity'. To do this, suppose that there
is an $R$-homomorphism $\varphi'':T''\rt R$. So, defining the morphism $\varphi'=\varphi''t':T'\rt R$,
one obtains a morphism $(\varphi', \varphi''):(T'\rt T'')\lrt(R\rt R)$ in $\M$.
So, one may obtain a morphism $(0, \varphi'):(T\rt T')\lrt (0\rt R)$ in $\M$.
According to the hypothesis, this map factors through $(P_0\rt P_0\oplus Q_0)$
and in particular, we have the following commutative square;
{\fo$$\begin{CD}P_0 @>>>0 \\@VVV @VVV\\
P_0\oplus Q_0 @>{\theta}>>R.\end{CD}$$}
implying that any element of $P_0$ goes to zero under $\theta$.
Using this fact, it is fairly easy to see that $\varphi''$
factors through $Q_0$. This indeed means that $A$ is horizontally linked, because of \cite[Theorem 2]{MS}.
The case for $B$ can be obtained similarly. The proof then is complete.
\end{proof}

The next result, which is an immediate consequence of the above theorem, provides a
large class of horizontally linked morphisms.
\begin{cor}Let $(A\st{\f}\rt B)$ be an arbitrary object of $\G$. If $A$ and $B$
are stable, then $\f$ is horizontally linked.
\end{cor}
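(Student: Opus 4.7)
The strategy is to apply Theorem \ref{th5}, which reduces the problem to verifying that $A$ and $B$ are horizontally linked $R$-modules. Since stability of both $A$ and $B$ is built into the hypothesis, this reduction is immediate.

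First I would unwind the assumption $\f\in\G$. By \ref{s5}, this means $\f\in\Sc$ is a monomorphism and both $A$ and $C:=\coker{\f}$ belong to $\GG(R)$. Since $\GG(R)$ is closed under extensions, the short exact sequence $0\rt A\st{\f}\rt B\rt C\rt 0$ forces $B\in\GG(R)$ as well. Thus $A$ and $B$ are stable Gorenstein projective $R$-modules.

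Next, by \cite[Theorem 2]{MS}, a stable module $M$ is horizontally linked if and only if $\Ext^1_R(\Tr M, R)=0$. For a Gorenstein projective module $M$ this vanishing is a standard fact: from a complete (i.e.\ totally acyclic) projective resolution of $M$ one sees that $\Tr M$ is again (up to projective summands and a shift) Gorenstein projective, so $\Ext^i_R(\Tr M, R)=0$ for every $i\ge 1$. Alternatively, one may invoke the Auslander exact sequence
\[
0\rt\Ext^1_R(\Tr M, R)\rt M\rt M^{**}\rt\Ext^2_R(\Tr M, R)\rt 0,
\]
combined with the fact that biduality $M\rt M^{**}$ is an isomorphism on $\GG(R)$. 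Either way, $\Ext^1_R(\Tr A, R)=0=\Ext^1_R(\Tr B, R)$, so $A$ and $B$ are horizontally linked $R$-modules.

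Applying Theorem \ref{th5} now yields that $\f$ is horizontally linked, which is exactly the assertion. There is no real obstacle here; the proof is a short combination of Theorem \ref{th5} with the classical characterisation of horizontal linkage via vanishing of $\Ext^1(\Tr(-), R)$, the only point that requires a word of care being the extension-closure argument to pass from $A,C\in\GG(R)$ to $B\in\GG(R)$.
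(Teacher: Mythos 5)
Your proof is correct and follows essentially the same route as the paper: reduce via Theorem \ref{th5} to the statement that $A$ and $B$ are horizontally linked, which holds because they are stable Gorenstein projective modules. The paper dismisses this last point with the word ``evidently,'' whereas you supply the justification (extension-closure of $\GG(R)$ to get $B\in\GG(R)$, and the vanishing $\Ext^1_R(\Tr(-),R)=0$ on $\GG(R)$ combined with \cite[Theorem 2]{MS}), which is a welcome filling-in of detail rather than a different argument.
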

\begin{proof}Since $(A\st{\f}\rt B)\in\G$, $A$ and $B$ belong to ${\GG}(R)$ and so
evidently, are horizontally linked. Hence the above theorem finishes the proof.
\end{proof}




\section{Auslander-Reiten translations}
In this section we show that the subcategories $\G$ and $\E$ admit almost split
sequences and also the Auslander-Reiten translations in
 $\HH$, $\G$ and $\E$ of $\M$ are computed in terms of module case.
Throughout this section $R$ is a $d$-dimensional commutative complete Gorenstein local ring.

\begin{dfn}\label{def1}An object  $(A\st{\f}\rt B)$ in $\HH$ is said to
be locally projective on the punctured spectrum of $R$, provided $A_{\fp}$
and $B_{\fp}$ are  projective $R_{\fp}$-modules and also, $(A_{\fp}\rt B_{\fp})$
is a split $R_{\fp}$-monomorphism, for every non-maximal prime ideal $\fp$ of $R$.
 Let ${\f}$ be an object of $\G$ or $\E$. Then ${\f}$ is locally projective on
 the punctured spectrum of $R$, whenever $A_{\fp}$ and $B_{\fp}$ are
 projective $R_{\fp}$-modules, for all non-maximal prime ideals $\fp$ of $R$.
\end{dfn}

It should be noted that, according to the description of projective
objects in the morphism category $\M$ which has been appeared
in \ref{s3}, and the equivalence ${\rm mod}R\Q\cong\M$, where $\Q$
is the quiver $\Q:\bullet\rightarrow\bullet$, one deduces that the
 subcategory of $\HH$ consisting of all objects which are locally
 projective on the punctured spectrum of $R$ is equivalent to the
 subcategory of ${\rm mod} R\Q$ consisting of all $R\Q$-modules
 $X$ which are maximal Cohen-Macaualy as $R$-modules and for any
 non-maximal prime ideal $\fp$ of $R$, $X_{\fp}$ is a projective
 $(R\Q)_{\fp}$-module. Consequently, our notion of locally projective
 objects, agrees with the sense of \cite{Au} and \cite{AR5}.

\begin{remark}\label{rem3} Take an indecomposable non-projective object
$(A\st{\f}\rt B)$ of $\HH$  which is locally projective on the
punctured spectrum of $R$.
Due to the main result of \cite{AR5}, there is an almost
split sequence in $\HH$ ending in $(A\st{\f}\rt B)$.
In particular, the Auslander-Reiten translation of ${\f}$ in $\HH$, $\tau_{\HH}({\f})$
equals to $(\Omega^d\Tr_{\HH}({\f}))'$.
Inspired by this fact, for an arbitrary object ${\h}$ in $\HH$,
we let $\tau_{\HH}({\h})=(\Omega^d\Tr_{\HH}({\h}))'$. Thus, we have the following.
\end{remark}

\begin{cor}\label{cor1}Let $(A\st{\f}\rt B)$ be an arbitrary indecomposable non-projective object of $\G$.
Then one has the following isomorphisms;
\[\begin{array}{lllll}\tau_{\HH}({\f}) & \cong {\Coker}(\red(\G{\text -cov}~\tau_R {\f}))\\
&\cong\red(\E{\text -env}~\tau_R {\Coker}({\f})).
\end{array}\]
\end{cor}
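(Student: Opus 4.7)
The plan is to read both isomorphisms directly off the syzygy description of $\Tr_{\HH}({\f})$ established in Corollaries \ref{lem6} and \ref{lem9} (taken at $i=d$), and then transport them through the duality $(-)'\colon\G\rt\E$ recorded in \ref{s5} and \ref{s4}, using the identification $\tau_R(-)\cong(\Omega^d_R\Tr_R(-))'$ that is valid over the $d$-dimensional complete Gorenstein local ring $R$.

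First I would deduce the identity $\tau_{\HH}({\f})\cong\red(\E\text{-env}~\tau_R\Coker({\f}))$. By Corollary \ref{lem6} with $i=d$, $\Omega^d_{\HH}\Tr_{\HH}({\f})\cong\red(\G\text{-cov}~\Omega^d_R\Tr_R(\Coker({\f})))$. Applying $(-)'$ and recalling from Remark \ref{rem3} that $\tau_{\HH}({\f})=(\Omega^d_{\HH}\Tr_{\HH}({\f}))'$, the left hand side becomes $\tau_{\HH}({\f})$. On the right, the duality of \ref{s4} sends a $\G$-cover of an object of $\HH$ to an $\E$-envelope of its dual, and it commutes with passing to the reduced representative because $\G$ and $\E$ are Frobenius categories whose projective-injectives are exchanged by $(-)'$. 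Finally, $(\Omega^d_R\Tr_R\Coker({\f}))'\cong\tau_R\Coker({\f})$, which closes this chain.

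For the first isomorphism I would argue dually, starting from Corollary \ref{lem9} at $i=d$, which gives $\Omega^d_{\HH}\Tr_{\HH}({\f})\cong\Ker(\red(\E\text{-env}~\Omega^d_R\Tr_R({\f})))$. Applying $(-)'$ to an epimorphism $(X\rt Y)$ in $\E$ produces the monomorphism $(Y'\rt X')$ in $\G$; under this passage $\Ker$ turns into $\Coker$, the $\E$-envelope becomes a $\G$-cover exactly as in the previous step, and the inner argument $\Omega^d_R\Tr_R({\f})$ dualizes to $\tau_R({\f})$. Putting the pieces together yields $\tau_{\HH}({\f})\cong\Coker(\red(\G\text{-cov}~\tau_R{\f}))$.

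The main obstacle I anticipate is the careful bookkeeping of how $(-)'$ intertwines with each of the operators $\red$, $\G\text{-cov}$, $\E\text{-env}$, $\Ker$, and $\Coker$, and in particular the verification that $(\Omega^d_R\Tr_R(-))'$ agrees functorially with $\tau_R(-)$ on morphisms coming from objects of $\HH$. Once these identifications are in place, each of the two isomorphisms becomes an immediate transcription of the corresponding corollary through $(-)'$.
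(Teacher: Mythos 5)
Your proposal is correct and follows essentially the same route as the paper: both isomorphisms are read off from Corollaries \ref{lem6} and \ref{lem9} at $i=d$ and then transported through the duality $(-)'$ via \ref{s4}, using $\tau_{\HH}({\f})=(\Omega^d_{\HH}\Tr_{\HH}({\f}))'$ and $(\Omega^d_R\Tr_R(-))'\cong\tau_R(-)$. The only cosmetic difference is that you treat the two isomorphisms in the opposite order from the paper.
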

\begin{proof}In view of Corollary \ref{lem9}, $\Omega^d_{\HH}\Tr_{\HH}({\f})\cong{\Ker}(\red(\E{\text -env}
(\Omega^d\Tr B\rt\Omega^d\Tr A))$. So, applying the functor $(-)'$ and using the fact that the
functor $(-)'$ preserves the short exact sequences in ${\GG}(R)$ yield the first isomorphism.
In order to achieve the second isomorphism, it suffices to apply \ref{s4}
to the isomorphism $\Omega^d_{\HH}\Tr_{\HH}({\f})\cong\red(\G{\text -cov}
(\Omega^d_R\Tr~\coker({\f})\rt\Omega^d\Tr B))$, given in Corollary \ref{lem6}.
The proof then is complete.
\end{proof}

In the sequel, we intend to investigate the existence of almost split sequences in
the subcategories $\G$ and $\E$ and determine the Auslander-Reiten translations
in these subcategories. For this purpose, we need to recall some notation and terminology from \cite{LNP}.
Let $\tilde{\G}$ and $\undertilde{\E}$ stand for the full subcategories generated
by the objects in $\G$ of $\overline{\HH}$ and $\E$ of $\underline{\HH}$, respectively.
It should be observed that the injectively stable category $\overline{\G}$ of $\G$ is a
quotient category of $\tilde{\G}$, while the projectively stable category
$\underline{\E}$ of $\E$ is a quotient category of $\undertilde{\E}$.

\begin{dfn}\label{def2}Let ${\h}$ be an object in $\HH$. A morphism
$\eta:{\f}\rt{\h}$ in $\HH$ with ${\f}\in\G$ is called an
{\sl injectively stable $\G$-cover} of ${\h}$ if $\overline{\eta}$ is a $\tilde{\G}$-cover
of ${\h}$ in $\overline{\HH}$ and ${\f}$ has no non-zero injective summand in $\HH$.
In this case, we will write ${\f}=\tilde{\G}{\text -cov}{\h}$.
In a dual manner one can define the notion of {\sl projectively stable ${\E}$-envelope},
which will be denoted by $\undertilde{\E}{\text-env}{\h}$.
\end{dfn}

\begin{prop}Let $[{\f~~e}]:X\rt Y\oplus Q$ be an arbitrary object of $\G$ in which $Q$ is a projective $R$-module.  Consider the natural map  {\fo \[ \xymatrix@R-2pc {  &  X\ar[dd]~   & X\ar[dd]~  \\  \theta: &  \ar[r]^{1}_{[1~~0]^t}  \ar[r]&_{\ \  } {\f}   \\ & Y\oplus Q & Y  }\]} Then for any $(A\st{\g}\rt B)\in\G$, the induced map $\overline{\Hom}_{\HH}(1_{\g}, \theta):\overline{\Hom}_{\HH}({\g}, [{\f~~e}])\rt\overline{\Hom}_{\HH}({\g}, {\f})$ is an isomorphism.
\end{prop}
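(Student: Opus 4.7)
The plan is to work with the natural short exact sequence
\[
0 \longrightarrow (0 \to Q) \xrightarrow{\iota} [\f~e] \xrightarrow{\theta} \f \longrightarrow 0
\]
in $\HH$, where $\iota$ is the inclusion of $Q$ as the second summand of the target of $[\f~e]$. Applying $\Hom_\HH(\g, -)$ reduces the question to controlling the two ends of the resulting long exact sequence.

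For surjectivity of $\overline{\Hom}_\HH(1_\g, \theta)$, I would establish $\Ext^{1}_\HH(\g,(0\to Q))=0$. By \ref{s5}, both $(Q \xrightarrow{1} Q)$ and $(Q \to 0)$ lie in $\inj\HH$, so the sequence
\[
0 \to (0 \to Q) \to (Q \xrightarrow{1} Q) \to (Q \to 0) \to 0
\]
is an injective coresolution of $(0\to Q)$ in $\HH$. Applying $\Hom_\HH(\g, -)$ identifies the relevant map with precomposition $\g^{\ast}\colon \Hom_R(B, Q) \to \Hom_R(A, Q)$. Since $C=\coker\g\in\GG(R)$ and $Q$ is projective, $\Ext^{1}_R(C,Q)=0$, so $\g^{\ast}$ is surjective and hence $\Ext^{1}_\HH(\g,(0\to Q))=0$. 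Consequently $\theta_*$ is surjective at the $\Hom$-level, and therefore also on $\overline{\Hom}_\HH$.

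For injectivity, I would show that the image of $\iota_*\colon \Hom_\HH(\g,(0\to Q))\to \Hom_\HH(\g, [\f~e])$ lies in the injectively trivial subgroup $\mathfrak{I}_\HH(\g, [\f~e])$. A direct computation identifies this image with the morphisms $(0,(0,v))\colon \g\to[\f~e]$ for $v\colon B\to Q$ satisfying $v\g=0$. Each such map factors through the projective object $(0\to Q)\in\proj\HH$ and therefore vanishes in the projectively stable category; the key step is to upgrade this into a factorization through an object of $\inj\HH$, for example by factoring $\iota$ itself through the injective envelope $(Q\xrightarrow{1}Q)\in\inj\HH$.

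This bridging step is the main obstacle: it requires producing a morphism $(Q\xrightarrow{1}Q)\to[\f~e]$ through which $\iota$ factors, i.e., a map $c\colon Q\to X$ with $\f c=0$ whose composition with $e$ acts as the identity on $\im v$. Such a $c$ should be obtained by exploiting the $\G$-membership of $[\f~e]$ (so that $\coker[\f~e]^{t}\in\GG(R)$) together with projectivity of $Q$ and the duality $(-)'\colon \G\leftrightarrow\E$ from \ref{s4}; the naive request $ec=1_Q$ is too strong in general, and the correct factorization only emerges after carefully using how the $\G$-structure couples $\f$ and $e$.
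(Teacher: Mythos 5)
Your surjectivity argument is correct and is essentially the paper's: the paper observes that the kernel of $\theta$ is the projective object $(0\rt Q)$ of $\HH$ and that $\g$, being Gorenstein projective, satisfies $\Ext^1_{\HH}({\g},(0\rt Q))=0$, so that $\theta$ is a $\G$-precover; your route through the coresolution $0\rt(0\rt Q)\rt(Q\st{1}\rt Q)\rt(Q\rt 0)\rt 0$ together with $\Ext^1_R(\coker{\g},Q)=0$ reaches the same conclusion (you should also record that $\Ext^1_{\HH}({\g},(Q\st{1}\rt Q))=0$, which holds because $(Q\st{1}\rt Q)$ is injective in $\HH$, to close that computation).

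The injectivity half has a genuine gap, in two respects. First, even granting your bridging step, proving that the image of $\iota_*$ (equivalently, the kernel of $\theta_*$, by left exactness of $\Hom_{\HH}({\g},-)$) lies in $\mathfrak{I}_{\HH}({\g},[{\f~~e}])$ only disposes of those ${\h}$ with $\theta{\h}=0$ on the nose; for $\overline{\Hom}_{\HH}(1_{\g},\theta)$ to be injective you must handle ${\h}$ whose image $\theta{\h}$ is merely injectively trivial, i.e.\ you must lift an arbitrary injective factorization of $\theta{\h}$ through $\theta$. That lifting is where the paper's proof does its actual work: it treats the two kinds of indecomposable injectives of $\HH$, namely $(P\st{1}\rt P)$ and $(P\rt 0)$, and in each case explicitly rebuilds the factorization of $\theta{\h}$ so that it lands in $[{\f~~e}]$ while still passing through $(P\st{1}\rt P)$ (in the second case extending $A\rt P$ to $B\rt P$ via $\Ext^1_R(\coker{\g},P)=0$). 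Second, the bridging step itself cannot be carried out at the stated level of generality. Since $[{\f~~e}]$ is a monomorphism, any morphism into it factoring through an injective of $\HH$ is forced to factor through $\add(P\st{1}\rt P)$, and unwinding this shows that $(0,(0,w)):{\g}\rt[{\f~~e}]$ with $w{\g}=0$ is injectively trivial if and only if there exists $c:B\rt X$ factoring through a projective $R$-module with ${\f}c=0$ and $ec=w$. Taking $X=Y=0$, so that $[{\f~~e}]=(0\rt Q)$ and ${\f}=0$, and ${\g}=(0\rt Q)$, $w=1_Q$, the map in question is the identity of $(0\rt Q)$, which is not injectively trivial because $(0\rt Q)$ is projective but not injective in $\HH$; no $c$ can exist. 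So your instinct that the naive request $ec=1_Q$ is "too strong" is right, but no refinement rescues the containment without extra hypotheses on $[{\f~~e}]$ (for instance that $e$ restricts to a projective envelope of $\ke({\f})$, as for the $\G$-covers of Lemma \ref{s1}, which is the situation in which the proposition is applied in Theorem \ref{th4}). Note, finally, that the same residual term ${\h}-{\h'}$ in the image of $\iota_*$ is present, silently, in the paper's Case 1/Case 2 argument: what those cases actually establish is that every injectively trivial morphism ${\g}\rt{\f}$ admits an injectively trivial $\theta_*$-preimage, which is precisely the statement that the proof of Theorem \ref{th4} consumes.
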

\begin{proof}In view of the short exact sequence; {\fo \[ \xymatrix@R-2pc {  &  ~0 \ar[dd]~   & X\ar[dd]~  & X\ar[dd] \\ 0 \ar[r] &  _{ \ \ \ \ } \ar[r]  &_{\ \ \ \ \ } \ar[r] _{\ \ \ \ \ }&  _{\ \ \ \ \ }\ar[r] & 0, \\ & Q & Q\oplus Y & Y }\]} one infers that the morphism $\theta:[{\f~~e}]\rt{\f}$ is a $\G$-precover of ${\f}$ implying
the map $\overline{\Hom}_{\HH}(1_{\g}, \theta)$ is surjective. So, it only remains to show that $\overline{\Hom}_{\HH}(1_{\g}, \theta)$ is an injective map. To that end, take an arbitrary object $\overline{{\h}}$ of $\overline{\Hom}_{\HH}({\g}, [{\f~~e}])$ such that its image is zero in $\overline{\Hom}_{\HH}({\g}, {\f})$. Therefore, ${\theta\h}\in\Hom_{\HH}({\g, \f})$ is injectively trivial. We show that the morphism ${\h}\in\Hom_{\HH}({\g}, [{\f~~e}])$ is injectively trivial, as well. According to \ref{s5}, we must consider two cases.
Case 1:  Suppose that the image of ${ \h}$ factorizes over an injective object of $\HH$ as follows; {\fo \[ \xymatrix@R-2pc {  &  ~ A\ar[dd]^{}~   & P\ar[dd]^{}~  & X\ar[dd]^{} \\ &  _{ \ \ \ \ } \ar[r]^{{\bf u}_1}_{{\bf v}_1}  &_{\ \ \ \ \ } \ar[r]^{{\bf u}_2}_{{\bf v}_2} _{\ \ \ \ \ }&  _{\ \ \ \ \ }, &  \\ & B & P & Y}\]} where $P$ is a projective $R$-module. Hence, we may construct the following composition map;

{\fo \[ \xymatrix@R-2pc {  &  ~ A\ar[dd]^{}~   & P\ar[dd]^{}~  & X\ar[dd]^{} \\ &  _{ \ \ \ \ } \ar[r]^{{\bf u}_1}_{{\bf v}_1}  &_{\ \ \ \ \ } \ar[r]^{{\bf u}_2}_{[{{\bf v}_2}~~\e{{\bf u}_2}]~~} _{\ \ \ \ \ }&  _{\ \ \ \ \ }, &  \\ & B & P & Y\oplus Q}\]}

which means that ${\h}$ is an injectively trivial morphism. \\ Case 2. Assume that
 ${\theta\h\in\Hom_{\HH}(\g, \f})$ factorizes over an injective object of $\HH$ as follows;
   {\fo \[ \xymatrix@R-2pc {  &  ~ A\ar[dd]^{}~   & P\ar[dd]^{}~  & X\ar[dd]^{} \\ &
    _{ \ \ \ \ } \ar[r]^{{\bf u}_1}_{0}  &_{\ \ \ \ \ } \ar[r]^{{\bf u}_2}_{0} _{\ \ \ \ \ }&  _{\ \ \ \ \ } .&  \\ & B & 0 & Y}\]}
In view of the short exact sequence $0\rt A\rt B\rt C\rt 0$ in ${\GG}(R)$,
one deduces that the map ${\bf u}_1:A\rt P$ can be extended to the
map ${\e}_2:B\rt P$. Using this, one may construct the following composition map;

{\fo \[ \xymatrix@R-2pc {  &  ~ A\ar[dd]^{}~   & P\ar[dd]^{}~  & X\ar[dd]^{} \\ &
  _{ \ \ \ \ } \ar[r]^{{\bf u}_1}_{{\bf e}_2}  &_{\ \ \ \ \ }
\ar[r]^{{\bf u}_2}_{[0~~\e{{\bf u}_2}]~~} _{\ \ \ \ \ }&  _{\ \ \ \ \ } ,&  \\ & B & P & Y\oplus Q}\]}
that is, ${\h}$ is injectively trivial. The proof then is completed.
\end{proof}

The next result plays an important role in the proof the main result of this section, Theorem \ref{th1}.
The proof is fruitful from the point of view that it indicates that the injectively stable $\G$-cover
of a given object of $\HH$ may be recognized explicitly.

\begin{theorem}\label{th4}Let $(X\st{\f}\rt Y)$ be an arbitrary object in $\HH$.
Then $(X\st{\f}\rt Y)$ admits an injectively stable $\G$-cover.
\end{theorem}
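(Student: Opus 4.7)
My plan is to combine the explicit $\G$-cover construction of Lemma \ref{s1} with the Hom-comparison isomorphism furnished by the preceding proposition, and then trim off any summand that is injective in $\HH$.

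First, I would invoke Lemma \ref{s1} to obtain a $\G$-cover of $(X\st{\f}\rt Y)$ of the specific shape $\theta:[\f~~e]\rt\f$, where $[\f~~e]:X\rt Y\oplus Q$ with $Q$ a projective $R$-module and $\theta$ is the canonical morphism described in the preceding proposition. By that proposition, $\overline{\Hom}_{\HH}(\g,\theta)$ is bijective for every $\g\in\G$, so $\overline{\theta}:\overline{[\f~~e]}\rt\overline{\f}$ is at least a $\tilde{\G}$-precover of $\f$ in $\overline{\HH}$. To promote it to a cover, I would specialize the bijectivity at $\g=[\f~~e]$: any $\overline{\sigma}\in\overline{\End}_{\HH}([\f~~e])$ satisfying $\overline{\theta}\,\overline{\sigma}=\overline{\theta}$ must correspond to $\overline{1}$ under this bijection, hence equals $\overline{1}$ and is automatically an automorphism. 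Consequently $\overline{\theta}$ is right minimal, and therefore a $\tilde{\G}$-cover of $\f$ in $\overline{\HH}$.

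Next I would trim off the injective-in-$\HH$ summands of $[\f~~e]$. Since $[\f~~e]$ is a monomorphism, the classification of indecomposable injectives of $\HH$ recalled in \ref{s5} rules out any summand of the form $P\rt 0$, leaving only summands of the form $P\st{1}\rt P$. Using the Krull-Schmidt property of $\HH$ recorded in \ref{s11}.(ii), I would split $[\f~~e]=\f_1\oplus\f_2$ with $\f_2$ the sum of all such summands and $\f_1$ carrying none. Then $\f_1\in\G$ because $\G$ is closed under direct summands, and $\f_2$ is both projective and injective in $\HH$, so its image in $\overline{\HH}$ vanishes and $\overline{\f_1}\cong\overline{[\f~~e]}$ there. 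Setting $\eta=\theta|_{\f_1}:\f_1\rt\f$, the class $\overline{\eta}$ identifies with $\overline{\theta}$ under this isomorphism and therefore remains a $\tilde{\G}$-cover, so $\eta$ is an injectively stable $\G$-cover of $\f$ in the sense of Definition \ref{def2}.

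The only delicate point is extracting right minimality in $\overline{\HH}$ from the preceding proposition; once that is handled by specializing at $\g=[\f~~e]$, the remaining Krull-Schmidt trimming is formal, so I do not expect a serious obstacle. A subtle bookkeeping check worth performing at the end is that the restriction $\theta|_{\f_1}$ indeed represents, up to the isomorphism $\overline{\f_1}\cong\overline{[\f~~e]}$, the original $\overline{\theta}$—this follows because the complementary piece $\theta|_{\f_2}$ factors through the injective object $\f_2$ and so vanishes in $\overline{\HH}$.
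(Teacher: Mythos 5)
Your proposal is correct and follows essentially the same route as the paper: take the explicit $\G$-cover $[\f~~\e]\rt\f$ from Lemma \ref{s1}, use the preceding proposition to see that $\overline{[1~~0]^t}$ is a $\tilde{\G}$-cover in $\overline{\HH}$, and then strip off the injective-in-$\HH$ summands via Krull--Schmidt. The only (harmless, in fact slightly cleaner) deviation is in the minimality step: you deduce $\overline{\sigma}=\overline{1}$ directly from the injectivity of $\overline{\Hom}_{\HH}(1_{[\f~~\e]},\theta)$, whereas the paper first lifts the injectively trivial difference through $\theta$ and then invokes right minimality of the $\G$-cover in $\HH$.
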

\begin{proof}According to Lemma \ref{s1}, $[1~~0]^t:{[\f~~\e]}\rt{\f}$ is a $\G$-cover of ${\f}$,
where $\ke({\f})\rt P$ is a projective envelope of $\ke({\f})$ and ${\e}:X\rt P$ is its extension.
We would like to show that $\overline{[1~~0]^t}:{[\f~~\e]}\rt{\f}$ is a $\tilde{\G}$-cover of ${\f}$ in $\overline\HH$.
Evidently, this is a $\tilde{\G}$-precover of ${\f}$ in $\overline{\HH}$.
Suppose that there is a morphism ${\bf h}\in\End_{\HH}([{\f~~\e}])$ such that
 $\overline{[1~~0]^t{\bf h}}=\overline{[1~~0]^t}$ in $\overline{\HH}$.
 So there is an injectively trivial map ${\g}\in\Hom_{\HH}([{\f~~\e}], {\f})$ such that
$[1~~0]^t{\bf h}-[1~~0]^t={\g}$. Therefore, by the above proposition,
there exists a morphism ${\h_1}\in\End_{\HH}([{\f~~\e}])$ which is injectively trivial and $[1~~0]^t{\bf h}-[1~~0]^t=[1~~0]^t{\h_1}$.
 Due to the equality $[1~~0]^t({\h}-{\h_1})=[1~~0]^t$ in $\HH$ and using \ref{s1} again,
 we conclude that ${\h}-{\h_1}$ is an isomorphism in $\HH$ and so is
  $\overline{\h}-\overline{\h}_1$ in $\overline{\HH}$. But, as we have seen,
   $\overline{\h}_1=0$ in $\overline{\HH}$ and hence $\overline{\h}$ will
be an isomorphism in $\overline{\HH}$ and so $\overline{[1~~0]^t} : {[\f~~\e]}\rt{\f}$ is a $\tilde{\G}$-cover of ${\f}$.
Thus, in order to complete the proof one only needs to delete any non-trivial
injective direct summand of ${[\f~~\e]}$ in $\HH$.
\end{proof}


\begin{prop}
Let $(A\st{\f}\rt B)$ be an arbitrary indecompasable object in $\G$.
 Then for a given object $(X\st{\g}\rt Y)\in\G$, the isomorphism
$\underline{\Hom}_{{\HH}}({\g}, \tau_{R}{\Coker({\f})})\cong\underline{\Hom}_{{\HH}}({\g}, \tau_{\HH}({\f}))$ holds true.
\end{prop}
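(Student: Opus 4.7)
The plan is to exploit the second isomorphism of Corollary \ref{cor1}, namely $\tau_{\HH}({\f})\cong\red(\E{\text -env}~\tau_R{\Coker}({\f}))$, and to realise the asserted identification through the corresponding $\E$-envelope morphism. Denote ${\h}:=\tau_R{\Coker}({\f})=(\tau_R B\rt\tau_R C)\in\HH$. By \ref{s111}, the $\E$-envelope of ${\h}$ has the form ${\eta}:{\h}\rt[{\h~~\p}]^{t}$, where ${\p}:P\rt\tau_R C$ lifts a projective cover of $\coker({\h})$, and $\tau_{\HH}({\f})$ is obtained from $[{\h~~\p}]^{t}$ by deleting its injective-of-$\HH$ direct summands. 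These are of type $(P'\st{1}\rt P')$ (projective in $\HH$, hence trivial for $\underline{\Hom}_{\HH}$) or $(P'\rt 0)$, and the latter also satisfies $\underline{\Hom}_{\HH}({\g},(P'\rt 0))=0$ by the $\Ext$-vanishing argument spelled out in the next paragraph. Therefore it suffices to prove that ${\eta}$ induces an isomorphism
$$\underline{\Hom}_{\HH}({\g},{\h})\;\cong\;\underline{\Hom}_{\HH}({\g},[{\h~~\p}]^{t}).$$

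I would work with the short exact sequence $0\rt{\h}\st{\eta}\rt[{\h~~\p}]^{t}\rt(P\rt 0)\rt 0$ in $\HH$, whose cokernel $(P\rt 0)$ is injective in $\HH$ by \ref{s5}. For ${\g}=(X\rt Y)\in\G$, a direct computation gives $\Hom_{\HH}({\g},(P\rt 0))\cong\Hom_R(X,P)$. Since $\coker({\g})\in\GG(R)$ and $P$ is projective, $\Ext^{1}_{R}(\coker({\g}),P)=0$, so any $u:X\rt P$ extends to some $v:Y\rt P$, and then the morphism $(u,0):{\g}\rt(P\rt 0)$ factors through the projective object $(P\st{1}\rt P)$ of $\HH$. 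Consequently $\underline{\Hom}_{\HH}({\g},(P\rt 0))=0$, and the induced map $\underline{\Hom}_{\HH}({\g},{\h})\rt\underline{\Hom}_{\HH}({\g},[{\h~~\p}]^{t})$ is surjective.

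For injectivity, suppose $\phi:{\g}\rt{\h}$ satisfies ${\eta}\phi=\lambda\mu$ with $\mu:{\g}\rt\pi$, $\lambda:\pi\rt[{\h~~\p}]^{t}$ and $\pi$ projective in $\HH$. Decomposing $\pi$ into indecomposable summands of the form $(0\rt P')$ and $(P'\st{1}\rt P')$, I would mirror the two-case analysis from the proposition preceding Theorem \ref{th4}. The case $\pi=(0\rt P')$ is direct. For $\pi=(P'\st{1}\rt P')$, parametrise $\lambda$ by components $\lambda_{11},\lambda_{12},\lambda_{2}$ subject to the envelope relation ${\h}\lambda_{11}+{\p}\lambda_{12}=\lambda_{2}$, and write $\mu=(\mu_{1},\mu_{2})$ with $\mu_{2}{\g}=\mu_{1}$. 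Matching components in ${\eta}\phi=\lambda\mu$ forces $\lambda_{12}\mu_{1}=0$, which compels $\lambda_{12}\mu_{2}:Y\rt P$ to factor through $\coker({\g})$; this allows $\phi$ to be written as a sum of two morphisms, one factoring through $(P'\st{1}\rt P')$ and the other through $(0\rt P)$, both projective in $\HH$. Hence $\phi$ is projectively trivial. The main obstacle is precisely this recombination, which is why one has to argue via the explicit $\E$-envelope presentation of $\tau_{\HH}({\f})$ afforded by Corollary \ref{cor1} rather than through any abstract formalism.
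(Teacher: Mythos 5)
Your argument is correct and follows essentially the same route as the paper: both identify $\tau_{\HH}({\f})$ via Corollary \ref{cor1} as the ($\E$-envelope) object $(\tau_{R}B\oplus Q\rt\tau_{R}C)$, pass to the short exact sequence in $\HH$ with cokernel term $(Q\rt 0)$, and reduce the claim to the vanishing $\underline{\Hom}_{\HH}({\g},(Q\rt 0))=0$, established exactly as you do by extending a map $X\rt Q$ along the monomorphism ${\g}$ (using $\Ext^1_R(\coker({\g}),Q)=0$) and factoring through the projective object $(Q\st{1}\rt Q)$. The only difference is presentational: the paper invokes the induced left-exact sequence of stable Hom groups and concludes at once, whereas you verify the injectivity and surjectivity of the comparison map by an explicit componentwise analysis.
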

\begin{proof}
Letting $C=\coker({\f})$, Corollary \ref{cor1} yields that
$\tau_{\HH}({\f})=(\tau_{R}B\oplus Q\rt\tau_{R}C)$, for some projective $R$-module $Q$.
Consider the following short exact sequence in $\HH$; {\fo \[ \xymatrix@R-2pc {  &  ~\tau_{R}B \ar[dd]~
 & \tau_{R}B\oplus Q\ar[dd]~  & Q\ar[dd] \\ 0 \ar[r] &
 _{ \ \ \ \ } \ar[r]  &_{\ \ \ \ \ } \ar[r] _{\ \ \ \ \ }&  _{\ \ \ \ \ }\ar[r] & 0 .\\ & \tau_{R}C & \tau_{R}C & 0 }\]}
So we have an exact sequence; $$0\lrt\underline{\Hom}_{{\HH}}({\g}, \tau_R{\Coker({\f})})\lrt\underline{\Hom}_{{\HH}}({\g}, \tau_{\HH}{\f})
\lrt\underline{\Hom}_{\HH}({\g}, Q\rt 0).$$ Thus, in order to obtain the desired result,
it suffices to show that $\underline{\Hom}_{\HH}({\g}, Q\rt 0)=0$. To do this, take a morphism
${\bf h}=({\bf h_1}, 0)\in\Hom_{\HH}({\g}, Q\rt 0)$. Since $(X\st{\g}\rt Y)\in\G$,
${\bf h_1}:X\rt Q$ factorizes over ${\g}$, that is, there is an $R$-homomorphism
${\bf h_2}:Y\rt Q$ such that ${\bf h_1}={\bf h_2}{\g}$. This would imply that ${\bf h}$
factors through a projective object $(Q\rt Q)$ meaning that ${\h}$ is a projectively trivial map.
So the claim follows.
\end{proof}


With all these prerequisites in hand, we are now able to derive the following result.
\begin{cor}\label{pro10}Let $(A\st{\f}\rt B)$ be an indecomposable
 object of $\G$. Then the isomorphism
 $\red(\G{\text-cov}~\tau_{\HH}({\f}))\cong\red(\G{\text-cov}~\tau_{R} {\Coker}({\f}))$ holds true.
\end{cor}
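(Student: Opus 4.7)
The plan is to leverage the stable $\Hom$ isomorphism $\underline{\Hom}_{\HH}(\g, \tau_R \Coker(\f)) \cong \underline{\Hom}_{\HH}(\g, \tau_{\HH}(\f))$ (valid for every $\g \in \G$) established in the previous proposition. This isomorphism is induced by the natural monomorphism $\iota \colon \tau_R \Coker(\f) \hookrightarrow \tau_{\HH}(\f)$ coming from the short exact sequence
\[
0 \to \tau_R \Coker(\f) \xrightarrow{\iota} \tau_{\HH}(\f) \to (Q \to 0) \to 0
\]
in $\HH$ that appears in the proof of that proposition, where $Q$ is the projective $R$-module from the description $\tau_{\HH}(\f) = (\tau_R B \oplus Q \to \tau_R C)$ furnished by Corollary~\ref{cor1}. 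I would set $M = \red(\G{\text-cov}\,\tau_R \Coker(\f))$ and $N = \red(\G{\text-cov}\,\tau_{\HH}(\f))$; both belong to $\G$ and carry no projective-in-$\HH$ direct summands, and my goal is to show that $M$ and $N$ are each direct summands of the other and then conclude $M \cong N$ via the Krull--Schmidt property of $\HH$ recorded in \ref{s11}(ii).

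For one direction I would compose the $\G$-cover $M \to \tau_R \Coker(\f)$ with $\iota$ to obtain $\alpha \colon M \to \tau_{\HH}(\f)$. Given any $\g \in \G$ and any morphism $\psi \colon \g \to \tau_{\HH}(\f)$, the stable $\Hom$ isomorphism produces a $\psi' \colon \g \to \tau_R \Coker(\f)$ with $\psi - \iota \psi'$ projectively trivial; the $\G$-precover property factors $\psi'$ through $M$, so $\psi$ differs from a map factoring through $\alpha$ by a map factoring through a projective object of $\HH$. Choosing a sufficiently large projective $P \in \HH$ together with a map $P \to \tau_{\HH}(\f)$ through which every such projectively trivial map from an object of $\G$ factors, the resulting morphism $M \oplus P \to \tau_{\HH}(\f)$ becomes a genuine $\G$-precover. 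By minimality of the $\G$-cover, $N$ is a direct summand of $M \oplus P$, and since $N$ has no projective summands, $N$ is in fact a direct summand of $M$.

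The reverse direction requires more care because there is no natural morphism $\tau_{\HH}(\f) \to \tau_R \Coker(\f)$ serving as a literal inverse to $\iota$. Instead, I would pick a representative $\beta \colon N \to \tau_R \Coker(\f)$ of the preimage, under the stable $\Hom$ isomorphism, of the class of the cover morphism $N \to \tau_{\HH}(\f)$. The crucial observation is that, since the comparison $\iota_*$ on stable $\Hom$ out of each $\g \in \G$ is bijective, a morphism $\gamma \colon \g \to \tau_R \Coker(\f)$ is projectively trivial if and only if $\iota \gamma$ is; I would exploit this equivalence of projective triviality to descend the $\G$-precover property of $N \to \tau_{\HH}(\f)$ back through $\iota$, concluding that $N \oplus P' \to \tau_R \Coker(\f)$ is a $\G$-precover for an appropriate projective $P' \in \HH$ and hence, by minimality of $M$, that $M$ is a direct summand of $N$. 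The hard part will be precisely this symmetry step: producing $\beta$ and transferring factorizations in the absence of a genuine retraction of $\iota$, which I expect to handle by working systematically at the level of the projectively stable category and using the injectivity of $\iota_*$ to cross projective triviality back.
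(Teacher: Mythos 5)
Your argument is correct and follows exactly the route the paper intends: the corollary is stated as an immediate consequence of the preceding proposition's stable-Hom isomorphism $\underline{\Hom}_{\HH}(\g,\tau_R\Coker(\f))\cong\underline{\Hom}_{\HH}(\g,\tau_{\HH}(\f))$ (the paper gives no further details), and you have simply supplied the standard cover-versus-precover bookkeeping — transferring precovers along $\iota$ modulo maps through the projective cover, extracting mutual direct summands, and invoking Krull--Schmidt — needed to make that deduction explicit.
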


The theorem below, which is the main result of this section, not only guarantees
the existence of almost split sequence in $\G$, but also computes
the Auslander-Reiten translation within ${\rm mod}R$.

\begin{theorem}\label{th1}Let $(A\st{\f}\rt B)$ be an indecomposable  non-projective object of $\G$ which is locally projective on the
punctured spectrum of $R$. Then there is an almost split sequence in $\G$ ending in $\f$ and in particular,
$\tau_{\G}({\f})\cong\red(\G{\text-cov}~\tau_{R} \Coker(\f))$.
\end{theorem}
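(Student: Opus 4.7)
The plan is to pull back the almost split sequence for $\f$ in $\HH$ down to the subcategory $\G$ by means of a $\G$-cover, and then identify that cover via Corollary \ref{pro10}.

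First I would invoke Remark \ref{rem3}. Since $\proj\G=\proj\HH$ (see \ref{s5}), the object $\f$ is indecomposable and non-projective in $\HH$; being locally projective on the punctured spectrum in $\G$ (Definition \ref{def1}) and a monomorphism with Gorenstein-projective cokernel, $\f$ is automatically locally projective in the $\HH$-sense, so the main result of \cite{AR5} provides an almost split sequence in $\HH$ ending at $\f$, with left-hand term $\tau_{\HH}(\f)$. Theorem \ref{th4} then furnishes an injectively stable $\G$-cover $G\rt\tau_{\HH}(\f)$, with $G\in\G$ having no non-trivial injective summand in $\HH$.

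Next I would apply \cite[Proposition 4.5]{LNP} of Liu--Ng--Paquette, the result singled out in the introduction as the engine for transferring almost split sequences from an ambient category to a suitable subcategory via covers. The hypotheses hold here: $\G$ is extension-closed in $\HH$ by \ref{s5}, every object of $\HH$ admits a $\G$-cover by Lemma \ref{s1}, and Theorem \ref{th4} supplies the required lift in the injectively stable category. The LNP proposition then produces an almost split sequence in $\G$ ending at $\f$ whose left-hand term is $\red(\G\text{-cov}\,\tau_{\HH}(\f))$; in particular $\tau_{\G}(\f)\cong\red(\G\text{-cov}\,\tau_{\HH}(\f))$.

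To conclude, Corollary \ref{pro10} identifies $\red(\G\text{-cov}\,\tau_{\HH}(\f))$ with $\red(\G\text{-cov}\,\tau_R\Coker(\f))$, delivering the claimed formula for $\tau_{\G}(\f)$.

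The main technical hurdle will be keeping the three flavours of $\G$-cover cleanly separated --- the ordinary $\G$-cover of Lemma \ref{s1}, its reduced form, and the injectively stable $\G$-cover of Theorem \ref{th4} --- and verifying that the object the LNP proposition selects as $\tau_{\G}(\f)$ really coincides with the reduced $\G$-cover of $\tau_{\HH}(\f)$, rather than with a close cousin differing by a projective or injective summand. Once this matching is in place no extra computation is required; everything else is assembly of the results already established in the previous sections.
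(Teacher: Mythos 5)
Your proposal follows the paper's own proof essentially step for step: the almost split sequence in $\HH$ from \cite{AR5}, the injectively stable $\G$-cover from Theorem \ref{th4} (whose proof indeed shows it coincides with the reduced $\G$-cover of $\tau_{\HH}({\f})$), the transfer to $\G$ via \cite[Proposition 4.5]{LNP}, and the final identification by Corollary \ref{pro10}. The ``technical hurdle'' you flag is exactly the point the paper settles by citing the proof of Theorem \ref{th4}, so no further work is needed.
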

\begin{proof}
According to Theorem 2.1 of \cite{AR5}, ${\f}$ admits an almost split sequence in ${\HH}$,
say $0\lrt\tau_{\HH}({\f})\lrt {\h}\lrt {\f}\lrt 0$.
By virtue of Theorem \ref{th4}, $\tau_{\HH}({\f})$ admits an injectively stable $\G$-cover, which is denoted
by $\tilde{\G}{\text-cov}~\tau_{\HH}({\f})$. In particular, the proof of
Theorem \ref{th4} shows that $\tilde{\G}{\text-cov}~\tau_{\HH}({\f})=\red(\G{\text-cov}~\tau_{\HH}({\f}))$.
Now, one may apply \cite[Proposition 4.5]{LNP}
in order to conclude that ${\f}$ admits an almost split sequence in $\G$ and in particular,
$\tau_{\G}({\f})=\tilde{\G}{\text-cov}~\tau_{\HH}({\f})$. Hence, Corollary \ref{pro10} finishes the proof.
\end{proof}
Now we are going to present analogues of the above theorem for the subcategory $\E$.
To do this, first we state the following easy observation.

\begin{prop}\label{pro8}Let $0\rt{\g}\rt{\h}\rt{\f}\rt 0$ be a short exact sequence in $\G$.
Then this is an almost split sequence in $\G$ if and only if the sequence
obtained by applying the cokernel functor
$0\rt{\Coker(\g)}\rt{\Coker(\h)}\rt{\Coker(\f)}\rt 0$ is an almost split sequence in $\E$.

\end{prop}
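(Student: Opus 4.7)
The approach is to recognise that the cokernel functor $\Coker : \G \rt \E$, sending $(A\st{\f}\rt B)$ to $(B\rt\coker({\f}))$, is an equivalence of exact categories; once this is established, both directions of the proposition follow at once by transport of structure.

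First I would verify that $\Coker$ is an equivalence with quasi-inverse the kernel functor $\Ker : \E \rt \G$. Both categories are nothing but the category whose objects are short exact sequences $0\rt A\rt B\rt C\rt 0$ of modules in ${\GG}(R)$: from such a sequence one records either the monomorphism $(A\rt B)\in\G$ or the epimorphism $(B\rt C)\in\E$. The passage is well defined because ${\GG}(R)$ is closed under extensions (so $B\in{\GG}(R)$ whenever $A,C$ are) and under kernels of epimorphisms between its objects (so the kernel recovered from an epi in $\E$ again lies in ${\GG}(R)$); full faithfulness of $\Coker$ is the universal property of $\ker$ and $\coker$, since any commuting square of cokernels lifts uniquely to a commuting square of the original monomorphisms, and vice versa.

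Next, I would check that this equivalence respects the exact structure inherited from $\M$. A short exact sequence $0\rt{\g}\rt{\h}\rt{\f}\rt 0$ in $\G$ is, by definition, a $3\times 3$ diagram of monomorphisms between two short exact rows in ${\rm mod}\,R$; applying the snake lemma yields an exact fourth row $0\rt\Coker({\g})\rt\Coker({\h})\rt\Coker({\f})\rt 0$, which is the image short exact sequence in $\E$. The symmetric argument, taking kernel columns for a short exact sequence in $\E$, runs in the opposite direction, so $\Coker$ and $\Ker$ are mutually inverse equivalences of exact categories.

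With these two pieces in place the proposition drops out. An equivalence of exact categories preserves each of the three ingredients of an almost split sequence: non-splitness, since split epis are characterised categorically by the existence of a section; locality of the endomorphism ring of the left-hand term, since $\End_\G({\g})\cong\End_\E(\Coker({\g}))$ as rings under the equivalence; and the right almost split property of the right-hand morphism, because any non-split epimorphism $X\rt\Coker({\f})$ in $\E$ pulls back under $\Ker$ to a non-split epimorphism into ${\f}$ in $\G$, which factors through ${\h}\rt{\f}$ by hypothesis, and pushing this factorisation forward through $\Coker$ gives the required factorisation in $\E$; the converse direction is the same argument read backwards. The only step demanding any real care is the exactness check in Step 2, which is essentially the snake lemma for a diagram of monomorphisms, so no genuine obstacle arises.
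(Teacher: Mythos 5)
Your proof is correct, and it reaches the statement by a genuinely more structural route than the paper. The paper's own proof is two lines: it asserts, via a direct diagram chase, that ${\h}\rt{\f}$ is right almost split in $\G$ if and only if ${\Coker(\h)}\rt{\Coker(\f)}$ is right almost split in $\E$, and then transfers the locality of the endomorphism ring of the left-hand term by citing Proposition \ref{lem8}(i). What you do instead is isolate the fact that underlies both of those checks at once: $\Coker:\G\rt\E$ and $\Ker:\E\rt\G$ are mutually quasi-inverse exact equivalences, so every ingredient in the definition of an almost split sequence (non-splitness of the conflation, the right almost split property of the deflation, locality of $\End$ of the left-hand term) is transported automatically. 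This buys you both implications and both conditions simultaneously, and in particular it reproves the relevant half of Proposition \ref{lem8}(i) in the stronger form of a ring isomorphism $\End_{\G}({\g})\cong\End_{\E}({\Coker(\g)})$ rather than merely a transfer of locality. The price is that you must check the points the paper leaves implicit: that $\Ker$ actually lands in $\G$ (which needs ${\GG}(R)$ to be closed under kernels of epimorphisms), that $\Coker$ is full and faithful (the lifting of commuting squares, which is exactly the paper's unstated ``diagram chasing''), and that both functors carry conflations to conflations (the snake lemma applied to the two three-by-three diagrams). You address all of these, so the argument is complete; the paper's version is shorter on the page only because it leans on the previously proved endomorphism-ring lemma and suppresses the chase.
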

\begin{proof}By using the diagram chasing,
one can easily show that ${\h}\rt{\f}$ is right almost split in $\G$ if and only if
${\Coker(\h)}\rt{\Coker(\f)}$ is right almost split in $\E$.
So Lemma \ref{lem8}.(i) completes the proof.
\end{proof}

The result below describes the Auslander-Reiten translation in $\E$ within ${\rm mod}R$.

\begin{theorem}\label{th2}Let $(B\st{\g}\rt C)$ be an indecomposable non-projective object in $\E$ which is locally projective on the
punctured spectrum of $R$. Then there is an almost split sequence in $\E$ ending at $\g$ and, in particular,  $$\tau_{\E}({\g})\cong{\Coker}(\tau_{\G}({\Ker(\g})))\cong{\Coker}(\red(\G{\text-cov}~\tau_R {\g})).$$
\end{theorem}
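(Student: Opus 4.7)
The plan is to reduce Theorem \ref{th2} directly to Theorem \ref{th1} by exploiting the fact that $\Ker$ and $\Coker$ define mutually quasi-inverse equivalences between $\G$ and $\E$. Indeed, for $\g\in\E$ (an epimorphism) one has $\Coker(\Ker(\g))\cong\g$, and for $\f\in\G$ (a monomorphism) one has $\Ker(\Coker(\f))\cong\f$; both assignments preserve membership in $\GG(R)$ of the relevant modules thanks to the short exact sequences $0\rt\ke(\g)\rt B\rt C\rt 0$ and $0\rt A\rt B\rt\coker(\f)\rt 0$ together with closure of $\GG(R)$ under kernels of surjections.

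First I would check that the hypotheses on $\g$ transfer to $\Ker(\g)$. Since $\Ker$ is part of an equivalence, $\Ker(\g)$ is indecomposable in $\G$. For non-projectivity: $\proj\E=\inj\HH$ consists (up to direct sums) of the identities $P\st{1}\rt P$ on projective $R$-modules, while $\proj\G=\proj\M$ consists of sums of $P\st{1}\rt P$ and $0\rt P$, and one verifies directly that $\Ker$ sends $P\st{1}\rt P$ to $0\rt P$ and $P\rt 0$ to $P\st{1}\rt P$, so it sends indecomposable projective-injectives of $\E$ to indecomposable projectives of $\G$; hence $\g$ non-projective in $\E$ forces $\Ker(\g)$ non-projective in $\G$. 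For local projectivity on the punctured spectrum: by hypothesis $B_\fp$ and $C_\fp$ are projective for all non-maximal $\fp$, and localizing $0\rt\ke(\g)\rt B\rt C\rt 0$ yields a split exact sequence, so $(\ke(\g))_\fp$ is a summand of $B_\fp$ and thus projective; this confirms $\Ker(\g)\in\G$ is locally projective on the punctured spectrum in the sense of Definition \ref{def1}.

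Next I would apply Theorem \ref{th1} to $\Ker(\g)$ to obtain an almost split sequence
$$0\lrt\tau_\G(\Ker(\g))\lrt{\h}\lrt\Ker(\g)\lrt 0$$
in $\G$, together with the identification $\tau_\G(\Ker(\g))\cong\red(\G\text{-cov}~\tau_R\Coker(\Ker(\g)))\cong\red(\G\text{-cov}~\tau_R{\g})$, where the last step uses $\Coker(\Ker(\g))\cong\g$. Then Proposition \ref{pro8} allows me to apply the cokernel functor termwise; the resulting sequence
$$0\lrt\Coker(\tau_\G(\Ker(\g)))\lrt\Coker({\h})\lrt\Coker(\Ker(\g))\lrt 0$$
is an almost split sequence in $\E$ ending at $\g$. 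Reading off the left-hand term gives
$\tau_\E({\g})\cong\Coker(\tau_\G(\Ker(\g)))\cong\Coker(\red(\G\text{-cov}~\tau_R{\g}))$, as desired.

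The main obstacle is the bookkeeping concerning what "non-projective" and "locally projective on the punctured spectrum" mean on each side of the equivalence $\Ker\colon\E\rightleftarrows\G\colon\Coker$, since the Frobenius structures on $\G$ and $\E$ are dually arranged via $(-)'$ and Definition \ref{def1} uses two different formulations for $\HH$ versus $\G,\E$. Once this transport is settled, everything else is a direct citation of Theorem \ref{th1} and Proposition \ref{pro8}.
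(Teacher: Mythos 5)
Your proposal is correct and follows essentially the same route as the paper: transfer the hypotheses (indecomposability, non-projectivity, local projectivity on the punctured spectrum) from $\g$ to $\Ker(\g)$, apply Theorem \ref{th1} to $\Ker(\g)$, and then use Proposition \ref{pro8} to push the resulting almost split sequence through the cokernel functor into $\E$. You merely spell out the transport of hypotheses that the paper dismisses as obvious (with one small slip: $\inj\HH=\proj\E$ contains the objects $P\rt 0$ as well as $P\st{1}\rt P$, though your case analysis does cover both).
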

\begin{proof}Since $B\st{\g}\rt C$ is indecomposable, obviously the same is true for ${\Ker}({\g})=(\ke({\g})\rt B)$.
In addition, the hypothesis imposed on $(B\st{\g}\rt C)$, induces that $\Ker({\g})$ as an object of $\G$, is indecomposable non-projective
and locally projective on the punctured spectrum of $R$.
Therefore, Proposition \ref{pro8} combining with Theorem \ref{th1} yields the desired result.
\end{proof}

\begin{prop}Let $0\rt A\st{\f}\rt B\st{\g}\rt C\rt 0$ be an almost split sequence in ${\GG}(R)$.
\begin{itemize}\item[$(i)$]The almost split sequence in $\G$ and $\HH$ ending at $(0\rt C)$ has the form
{\fo \[ \xymatrix@R-2pc {  &  ~ A\ar[dd]^{1}~   & A\ar[dd]^{\f}~  & 0\ar[dd] \\ 0 \ar[r] &  _{ \ \ \ \ } \ar[r]^{1}_{\f}  &_{\ \ \ \ \ } \ar[r]^{0}_{\g} _{\ \ \ \ \ }&  _{\ \ \ \ \ }\ar[r] & 0 \\ & A & B & C }\]} \item[$(ii)$]Let ${\e}:A\rt P$ be a projective envelope of $A$. Then the almost split sequence in $\G$ ending at $(C\st{1}\rt C)$ has the form {\fo \[ \xymatrix@R-2pc {  &  ~ A\ar[dd]^{\e}~   & B\ar[dd]^{b}~  & C\ar[dd]^{1} \\ 0 \ar[r] &  _{ \ \ \ \ } \ar[r]^{\f}_{[1~~0]}  &_{\ \ \ \ \ } \ar[r]^{\g}_{[0~~1]^t} _{\ \ \ \ \ }&  _{\ \ \ \ \ }\ar[r] & 0 \\ & P & P\oplus C & C }\]} where $b$ is the map $[{\bf u}~~{\g}]$ with ${\bf u}:B\rt P$ is an extension of the map $\e$. \item[$(iii)$]  The almost split sequence in $\E$ and $\HH$ ending at $(C\st{1}\rt C)$ has the form {\fo \[ \xymatrix@R-2pc {  &  ~ A\ar[dd]~   & B\ar[dd]^{\g}~  & C\ar[dd]^{1} \\ 0 \ar[r] &  _{ \ \ \ \ } \ar[r]^{\f}  &_{\ \ \ \ \ } \ar[r]^{\g}_{1} _{\ \ \ \ \ }&  _{\ \ \ \ \ }\ar[r] & 0 \\ & 0 & C & C }\]}  \item[$(iv)$]  The almost split sequence in $\E$ ending at $(C\rt 0)$ has the form {\fo \[ \xymatrix@R-2pc {  &  ~ P\ar[dd]~   & P\oplus C\ar[dd]~  & C\ar[dd] \\ 0 \ar[r] &  _{ \ \ \ \ } \ar[r]^{[1~~0]}_{1}  &_{\ \ \ \ \ } \ar[r]^{[0~~1]^t} _{\ \ \ \ \ }&  _{\ \ \ \ \ }\ar[r] & 0 \\ & \coker({\e}) & \coker({\e}) & 0}\]}
\end{itemize}
\end{prop}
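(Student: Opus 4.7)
The strategy is uniform across the four parts: verify that the displayed sequence is short exact in the relevant subcategory, match the left endpoint with the Auslander-Reiten translate of the right endpoint via Theorems \ref{th1} and \ref{th2}, and confirm by a direct check that the rightmost map is right almost split. Exactness is routine: in (i) and (iii) the sequence consists of the given $0\to A\to B\to C\to 0$ in one row and a trivial sequence in the other, so commutativity of the squares and exactness at each position are immediate. In (ii) and (iv), one checks that $b=[{\bf u}~~{\g}]:B\to P\oplus C$ has zero kernel and cokernel isomorphic to $\coker({\e})$ via a short diagram chase, using the injectivity of the projective envelope ${\e}$ together with the surjectivity of ${\g}$; this simultaneously confirms that the middle terms lie in $\G$ and $\E$, respectively.

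For matching endpoints with $\tau$, Theorem \ref{th1} applied to $(0\to C)\in\G$ in part (i) gives $\tau_{\G}(0\to C)\cong\red(\G\text{-cov}~\tau_R\Coker(0\to C))$, and since $\Coker(0\to C)=(C\stackrel{1}{\to} C)$ and $\tau_R(C\stackrel{1}{\to} C)=(A\stackrel{1}{\to} A)$ already lies in $\G$, its $\G$-cover is itself. For (ii), $\Coker(C\stackrel{1}{\to} C)=(C\to 0)$ and $\tau_R(C\to 0)=(A\to 0)$; Lemma \ref{s1} exhibits the $\G$-cover of $(A\to 0)$ as $(A\stackrel{{\e}}{\to} P)$, where ${\e}$ extends a projective envelope of $\ke(A\to 0)=A$. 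Dually, Theorem \ref{th2} gives $\tau_{\E}(C\stackrel{1}{\to} C)=\Coker(A\stackrel{1}{\to} A)=(A\to 0)$ for (iii) and $\tau_{\E}(C\to 0)=\Coker(A\stackrel{{\e}}{\to} P)=(P\to\coker{\e})$ for (iv). In every case the left endpoint matches the displayed one and has local endomorphism ring inherited from the indecomposability of $A$.

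To conclude, I would verify right almost splitness of the rightmost map by reducing to that of ${\g}:B\to C$ in ${\GG}(R)$. For (i), given a non-split morphism $(X\stackrel{h}{\to} Y)\in\G$ to $(0\to C)$ with $Y$-component $\beta:Y\to C$, almost splitness of ${\g}$ lifts $\beta$ to $\gamma:Y\to B$, and ${\g}\gamma h=\beta h=0$ forces $\gamma h$ through $\im({\f})\cong A$, yielding the required morphism into $(A\stackrel{{\f}}{\to} B)$. For (ii), a non-split morphism $(X\stackrel{h}{\to} Y)\to(C\stackrel{1}{\to} C)$ has $X$-component not a split epi, which lifts through ${\g}$; the $P$-component of the needed morphism into the middle term is produced as an extension along $h$, existing because $\Ext^1_R(\coker h,P)=0$ (using $\coker h\in{\GG}(R)$ and $P$ projective). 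Parts (iii) and (iv) are dual, using the $\E$-envelope description from \ref{s111} in place of the $\G$-cover.

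The main obstacle will be the bookkeeping in part (ii): verifying that the map $b$ yields a well-defined object of $\G$ with the right cokernel, and checking right-almost-splitness while simultaneously reconciling the projective envelope's role in the middle term with the liftings through ${\g}$ and along ${\e}$. The key technical input making this work is the vanishing $\Ext^1_R(\coker h,P)=0$, which must be invoked carefully to produce the $P$-valued component of the factoring morphism.
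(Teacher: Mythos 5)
Your identification of the end terms is the same as the paper's: both routes compute $\tau_{\G}$, $\tau_{\E}$ via Theorems \ref{th1}, \ref{th2} (equivalently Corollaries \ref{lem6}, \ref{cor1}, \ref{pro10}) together with the explicit $\G$-cover of Lemma \ref{s1}, and both get $(A\st{1}\rt A)$, $(A\st{\e}\rt P)$, $(A\rt 0)$, $(P\rt\coker\e)$ respectively. Where you genuinely diverge is in certifying that the displayed sequences are almost split. The paper argues top-down: it invokes the existence of almost split sequences in $\HH$ (Remark \ref{rem3}, after checking that $(0\rt C)$ and $(C\st{1}\rt C)$ inherit "locally projective on the punctured spectrum" from $C$), passes from $\HH$ to $\G$ using that $\G$ is resolving, and obtains (iii) and (iv) from (i) and (ii) by the cokernel functor via Proposition \ref{pro8}. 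You instead verify right-almost-splitness by hand: reduce a non-split test morphism into $(0\rt C)$ or $(C\st{1}\rt C)$ to its module component, lift through the right almost split map $\g$ in ${\GG}(R)$, and manufacture the $P$-component of the factorization from $\Ext^1_R(\coker h,P)=0$. Your route buys something real: the paper's argument only pins down the two end terms, whereas your lifting argument actually certifies the displayed middle terms and maps; the cost is the bookkeeping you acknowledge in (ii), and that (iii), (iv) must either be done dually or borrowed from Proposition \ref{pro8} as the paper does.

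Two small points to tighten. First, an almost split sequence also requires non-splitness and a local endomorphism ring on the left: non-splitness follows because the second row of each display is the non-split sequence $0\rt A\rt B\rt C\rt 0$, but locality of $\End_{\HH}(A\st{\e}\rt P)$ in (ii) and of $\End_{\HH}(P\rt\coker\e)$ in (iv) is not literally "inherited from $A$"; you need that these objects are indecomposable (which follows from left minimality of the projective envelope $\e$) and then the Krull--Schmidt property of $\HH$ from \ref{s11}. Second, Theorems \ref{th1} and \ref{th2} carry the hypothesis that the object is locally projective on the punctured spectrum; you should record, as the paper does, that this transfers from $C$ to $(0\rt C)$, $(C\st{1}\rt C)$ and $(C\rt 0)$ before invoking them.
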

\begin{proof}$(i)$. It is easily verified that $C$ being non-projective, locally projective on the punctured spectrum of $R$ with local endomorphism ring, induces that the same is true for $(0\rt C)$ as an object in $\HH$ (and also $\G$). So, because of Remark \ref{rem3}, there is an almost split sequence in $\HH$ ending at $(0\rt C)$ and, in view of Corollary \ref{lem6} in conjunction with Corollary \ref{cor1}, $\tau_{\HH}(0\rt C)$ has the form $(A\st{1}\rt A)$. Consequently, it is also an almost split sequence in $\G$, since $\G$ is resolving. That is, $\tau_{\G}(0\rt C)=\tau_{\HH}(0\rt C)=(A\st{1}\rt A)$, as required.\\ $(ii)$. Analogues to part $(i)$, there is an almost split sequence in $\HH$ ending at $(C\st{1}\rt C)$. Making use of Corollary \ref{lem6} again, we obtain that $\tau_{\HH}(C\st{1}\rt C)=(A\rt 0)$. Hence Theorem \ref{th1} combining with Corollary \ref{pro10}, gives the desired result. \\ $(iii)$ and $(iv)$. The results follow by applying the cokernel functor to almost split sequences appearing in the proof of parts $(i)$ and $(ii)$, thanks to Proposition \ref{pro8}. The proof then is completed.
\end{proof}

\begin{remark}\label{rem1}Let $A\in{\GG}(R)$ be arbitrary. Then, by virtue of
\ref{s5}, the isomorphism $\tau^{-1}_{R}A\cong(\tau_{R}A')'$ holds true.
In addition, for objects ${\f}\in\G$, ${\g}\in\E$, one has the isomorphisms;  $\tau^{-1}_{\G}({\f})\cong(\tau_{\E}({\f'}))'$, $\tau^{-1}_{\E}({\g})\cong(\tau_{\G}({\g'}))'$,
and $\tau^{-1}_{\HH}({\g})\cong(\tau_{\HH}({\g'}))'$.
\end{remark}

We should stress again that, as we have pointed out in \ref{s5}, $\G$ and $\E$ are
Frobenius categories, i.e., $\proj \G=\inj \G$ and $\proj \E=\inj \E$.

Similar to Theorems \ref{th1} and \ref{th2}, one may deduce that for given
indecomposable non-projective objects ${\f}$ and ${\g}$ in $\G$ and $\E$, respectively,
there are almost split sequences starting at ${\f}$ and ${\g}$. In particular, we have the following.

\begin{theorem}\label{th3}Let ${\f}$  and  ${\g}$  be indecomposable non-projective
objects in $\G$ and  $\E$, respectively, which  are  locally projective on the
punctured spectrum of $R$. Then,
\begin{itemize}\item[$(i)$]$\tau^{-1}_{\HH}({\g})\cong{\Ker}(\red(\E{\text -env}~\tau^{-1}_R {\g}))\cong\red(\G{\text -cov}~\tau^{-1}_R {\Ker(\g}))$. \item[$(ii)$]$\tau^{-1}_{\E}({\g})\cong\red(\E{\text -env}~\tau^{-1}_R {\Ker(\g)})$. \item[$(iii)$]$\tau^{-1}_{\G}({\f})\cong{\Ker}(\red(\E{\text -env}~\tau^{-1}_R {\f}))$.
\end{itemize}
\end{theorem}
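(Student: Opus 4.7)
The plan is to deduce all three statements from the already-proved formulas for $\tau_{\HH}$, $\tau_{\G}$, and $\tau_{\E}$ by transporting them through the duality $(-)' : \G \leftrightarrow \E$ together with the translation identities collected in Remark \ref{rem1}. The existence of almost split sequences starting at $\f$ and $\g$ is obtained exactly as in Theorems \ref{th1} and \ref{th2}, by dualizing the construction for sequences ending at $\f'$ and $\g'$; what remains is the explicit identification of $\tau^{-1}$.

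For part $(i)$, I would start from the identity $\tau^{-1}_{\HH}(\g) \cong (\tau_{\HH}(\g'))'$ of Remark \ref{rem1} and apply Corollary \ref{cor1} to $\g' \in \G$ to obtain
\[
\tau_{\HH}(\g') \cong \Coker\bigl(\red(\G{\text -cov}~\tau_R \g')\bigr) \cong \red\bigl(\E{\text -env}~\tau_R \Coker(\g')\bigr).
\]
Dualizing each side hinges on four observations, all either already in the excerpt or immediate consequences of the duality: $(a)$ since $(-)'$ is exact on $\GG(R)$, one has $(\Coker(\h))' = \Ker(\h')$ whenever $\h$ lies in $\G$ or $\E$; $(b)$ by \ref{s4} together with Remark \ref{rem2}.$(2)$, $(\red(\G{\text -cov}~X))' \cong \red(\E{\text -env}~X')$ and vice versa; $(c)$ dualizing the identity $\tau^{-1}_R A = (\tau_R A')'$ of Remark \ref{rem1} gives $(\tau_R \h)' \cong \tau^{-1}_R \h'$ for every morphism $\h$ between modules in $\GG(R)$; and $(d)$ for $\g \in \E$, the identification $\Coker(\g') = (\Ker(\g))'$ is immediate from applying $(-)'$ to the short exact sequence $0 \to \ker(\g) \to B \to C \to 0$. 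Combining $(a)$--$(d)$ converts each factor of the displayed formula into the claimed expression.

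Parts $(ii)$ and $(iii)$ follow by the same mechanism. For $(ii)$, I would start from $\tau^{-1}_{\E}(\g) \cong (\tau_{\G}(\g'))'$, apply Theorem \ref{th1} to $\g' \in \G$ to get $\tau_{\G}(\g') \cong \red(\G{\text -cov}~\tau_R \Coker(\g'))$, and dualize via $(b)$--$(d)$, using $\Coker(\g') = (\Ker(\g))'$. For $(iii)$, one uses $\tau^{-1}_{\G}(\f) \cong (\tau_{\E}(\f'))'$ together with Theorem \ref{th2} applied to $\f' \in \E$, which gives $\tau_{\E}(\f') \cong \Coker(\red(\G{\text -cov}~\tau_R \f'))$; dualizing through $(a)$--$(c)$ produces $\Ker(\red(\E{\text -env}~\tau^{-1}_R \f))$.

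The main care point I anticipate is identity $(b)$: that the reduced cover/envelope operations are genuinely interchanged by $(-)'$. One has to check not only that dualizing a $\G$-cover produces an $\E$-preenvelope (which is \ref{s4}) but that it is actually an $\E$-envelope, and that the $\red(-)$ operation commutes with dualization. The first follows because minimality is detected through local endomorphism rings, which are preserved by the duality (Proposition \ref{lem8}.$(i)$); the second follows from the Frobenius structure of $\G$ and $\E$ recorded just before the theorem statement, under which $(-)'$ exchanges projective summands in $\G$ with injective summands in $\E$, so a non-trivial projective (resp. injective) summand of the cover (resp. envelope) dualizes to a non-trivial injective (resp. projective) summand of the envelope (resp. cover).
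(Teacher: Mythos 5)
Your proposal is correct and follows essentially the same route as the paper: starting from the identities $\tau^{-1}_{\HH}(\g)\cong(\tau_{\HH}(\g'))'$, $\tau^{-1}_{\E}(\g)\cong(\tau_{\G}(\g'))'$ and $\tau^{-1}_{\G}(\f)\cong(\tau_{\E}(\f'))'$ of Remark \ref{rem1}, applying Corollary \ref{cor1}, Theorem \ref{th1} and Theorem \ref{th2} to the dual objects, and transporting the resulting formulas back through the duality $(-)'$ via \ref{s4} and the exchange of $\Ker$ and $\Coker$. Your extra care about why $(-)'$ interchanges \emph{reduced} covers and envelopes (minimality via local endomorphism rings, reducedness via the Frobenius structure) is a point the paper leaves implicit, but the argument is the same.
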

\begin{proof}$(i)$. We have the following \[\begin{array}{lllll}\tau^{-1}_{\HH}({\g})\cong(\tau_{\HH}({\g'}))'&\cong({\Coker}(\red(\G{\text -cov}~\tau_R ~{\g'}))'\\ & \cong{\Ker}(\red(\E{\text-env}~(\tau_R~ {\g'})'))\\
&\cong{\Ker}(\red(\E{\text-env}~\tau_R^{-1}~{\g})).
\end{array}\]
The first isomorphism follows from the previous remark and the second isomorphism holds
 true because of Corollary \ref{cor1}. The third isomorphism is valid, thanks to \ref{s4}.
 Another use of the previous remark yields the last isomorphism.
To achieve the second claimed isomorphism, similar to the preceding one, we have the following isomorphisms;
\[\begin{array}{lllll}\tau^{-1}_{\HH}({\g})\cong(\tau_{\HH}({\g'}))'&\cong(\red(\E{\text-env}~\tau_R {\Coker(\g'})))'\\ & \cong\red(\G{\text-cov}~\tau_R ({\Coker(\g'}))')\\
&\cong\red(\G{\text-cov}~(\tau_R({\Ker(\g}))')')\\&\cong\red(\G{\text-cov}~\tau^{-1}_R {\Ker(\g})).
\end{array}\]
 $(ii)$. Consider the following isomorphisms;

 \[\begin{array}{lllll}\tau^{-1}_{\E}({\g})\cong(\tau_{\G}({\g'}))'&\cong(\red(\G{\text-cov}~\tau_R({\Coker(\g'}))))' \\
 & \cong\red(\E{\text-env}~(\tau_R ({\Ker(\g}))')')\\
&\cong\red(\E{\text-env}~\tau^{-1}_R({\Ker(\g}))).
\end{array}\]
The first isomorphism follows from Remark \ref{rem1}, however the second one holds true, because of Theorem \ref{th1}.
The third isomorphism is true, as we have noted just above.

$(iii)$. In order to obtain the last statement, we consider the following
 \[\begin{array}{lllll}\tau^{-1}_{\G}({\f})\cong(\tau_{\E}({\f'}))'&\cong({\Coker}(\red(\G{\text-cov}~\tau_R {\f'})))'\\ &
  \cong{\Ker}(\red(\E{\text-env}~(\tau_R {\f'})'))\\
&\cong{\Ker}(\red(\E{\text -env}~\tau^{-1}_R {\f})).
\end{array}\]
The validity of the first and last isomorphisms come from Remark \ref{rem1}. The second isomorphism is
given in Theorem \ref{th2}, however, the third isomorphism is obvious. So the proof is complete.
\end{proof}

As an immediate consequence of Theorems \ref{th1} and \ref{th2} combining with the above theorem, we include the result below.

\begin{cor}Let ${\f}$ and ${\g}$ be arbitrary indecomposable non-projective
objects in $\G$ and $\E$, respectively, which are locally projective on
the punctured spectrum of $R$. Then ${\f}$ and ${\g}$ admit almost split sequences in $\G$ and $\E$, respectively.
\end{cor}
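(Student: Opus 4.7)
The plan is to establish each of the four existence assertions by invoking the appropriate results established earlier in the paper. Since both $\G$ and $\E$ are Frobenius categories by \ref{s5}, the notions of non-projective and non-injective coincide in each, so for each of $\f$ and $\g$ I need to produce both an almost split sequence ending at it and one starting at it.

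First, for the ``ending at'' direction, I would invoke Theorem \ref{th1} applied to $\f$ to obtain an almost split sequence in $\G$ ending at $\f$, and Theorem \ref{th2} applied to $\g$ to obtain one in $\E$ ending at $\g$. The hypotheses of these two theorems coincide exactly with those of the corollary (indecomposability, non-projectivity, and local projectivity on the punctured spectrum), so this step is an immediate application with no additional verification required.

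For the ``starting at'' direction, I would invoke Theorem \ref{th3}, which provides explicit formulas for $\tau^{-1}_{\G}(\f)$ and $\tau^{-1}_{\E}(\g)$; the very existence of these inverse Auslander--Reiten translates encodes the existence of almost split sequences starting at $\f$ in $\G$ and at $\g$ in $\E$. Here one uses the duality $(-)' = \Hom_R(-, R)$ between $\G$ and $\E$ from \ref{s5} and Remark \ref{rem1}, together with the fact that this duality preserves indecomposability, non-projectivity, and the locally projective condition on the punctured spectrum; so the hypotheses transfer cleanly to $\f' \in \E$ and $\g' \in \G$ when Theorems \ref{th1} and \ref{th2} are applied on the dual side.

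The main obstacle, such as it is, has already been absorbed into the proof of Theorem \ref{th3}: namely, verifying that the stated formulas genuinely compute inverse AR translates rather than being merely formal expressions, which required the duality argument together with Corollaries \ref{lem6} and \ref{lem9}. For the corollary itself, once Theorems \ref{th1}, \ref{th2}, and \ref{th3} are in hand, the proof reduces to assembling these citations and checking that the hypotheses propagate correctly under $(-)'$.
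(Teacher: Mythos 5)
Your proposal is correct and follows essentially the same route as the paper, which states this corollary as an immediate consequence of Theorems \ref{th1}, \ref{th2} and \ref{th3} without further argument. Your added remarks on the Frobenius property and on the transfer of hypotheses under the duality $(-)'=\Hom_R(-,R)$ merely make explicit what the paper leaves implicit.
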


\begin{example}Let $A$ be a non-projective object of ${\GG}(R)$ and $A\st{\f}\rt P$ be a
projective envelope of $A$ with $\End_{\HH}({\f})$ is local. Assume, additionally,
that ${\f}$ is a locally projective on the punctured spectrum of $R$.
So, as we have seen earlier, there is an almost split sequence in $\HH$
ending at $(A\rt P)$. In particular, according to Corollary \ref{cor1}
together with Corollary \ref{lem6}, one has $\tau_{\HH}({\f})=(Q\rt\tau_{R}L)$,
whenever $L=\coker({\f})$ and $Q$ is a projective cover of $\tau_{R}(L)$.
Moreover, since $A\in{\GG}(R)$, we deduce that the rows of almost split sequence ending at ${\f}$ are split and so it has the form
{\fo \[ \xymatrix@R-2pc {  &  ~ Q\ar[dd]^{\pi}~   & Q\oplus A\ar[dd]^{\g}~  & A\ar[dd]^{\f} \\ 0 \ar[r] &  _{ \ \ \ \ } \ar[r]  &_{\ \ \ \ \ } \ar[r] _{\ \ \ \ \ }&  _{\ \ \ \ \ }\ar[r] & 0 \\ & \tau_RL & \tau_RL\oplus P & P}\]}
We would like to recognize the map ${\g}$, explicitly. To do this, first we should
remark that, in view of Proposition \ref{lem8}.(i), $\End_{\HH}(P\rt L)$ is local and hence,
it can be easily seen that so is $\End_R(L)$. Also from the hypothesis made
on ${\f}$, we may infer that the $R$-module $L$ is a non-projective which is
locally projective on the punctured spectrum of $R$. Consequently,
there is an almost split sequence ending at $L$ as follows;
$$0\rt \tau_{R}L\rt X\rt L\rt 0.$$
Hence, in view of the following commutative diagram in ${\rm mod}R$; {\footnotesize{$$\begin{CD}
0 @>>> A @>{\f}>> \ P @>>> \ L @>>> 0\\
& & @V\alpha VV @V\beta VV @V1 VV & &\\ 0 @>>> \tau_{R}L @>>>X @>>> L
@>>> 0,\end{CD}$$}} we have that ${\g}= \tiny {\left[\begin{array}{ll} \pi & 0 \\ \alpha & {\f} \end{array} \right]}$.
\end{example}

We close this paper by mentioning that the stable categories
$\underline{\G}$ and $\underline{\E}$ admit Auslander-Reiten triangles.

\begin{remark}As we have mentioned in the introduction, the subcategories $\G$ and $\E$ are Frobenius.
In fact, we have the following; $$\proj\HH=\proj\G=\inj\G {\ \ \ \ \ \ }{\text and} { \ \ \ \ \ } \inj\HH=\inj\E=\proj\E.$$
So, the stable categories $\underline{\G}$ and $\underline{\E}$ are triangulated; see \cite [Theorem 2.6]{H}.
Hence, almost split sequences in $\G$ and $\E$ induce Auslander-Reiten triangles in $\underline{\G}$ and $\underline{\E}$, respectively.
In particular, by making use of Theorems \ref{th1}, \ref{th2} and \ref{th3}, one may determine the Auslander-Reiten
triangles in the categories $\underline{\G}$ and $\underline{\E}$. More precisely, suppose that $\X$ is an object of $\underline{\G}$ (resp. $\underline{\E}$) with local endomorphism ring $\End_{\underline{\G}}(\X)$ (resp. $\End_{\underline{\E}}(\X)$). Then, it is easy to verify that the
Auslander-Reiten triangles in $\underline{\G}$ and $\underline{\E}$ are as follows;
$$\tau_{\G}\X\rt\Y\rt\X\rightsquigarrow { \ \ \ \ }{\text and }{ \ \ \ \ } \X\rt\Y\rt\tau^{-1}_{\G}\X\rightsquigarrow$$
$$\tau_{\E}\X\rt\Y\rt\X\rightsquigarrow { \ \ \ \ }{\text and }{ \ \ \ \ } \X\rt\Y\rt\tau^{-1}_{\E}\X\rightsquigarrow.$$
\end{remark}

\end{document}